\newtheorem{Thm}{Theorem}[section]
\newtheorem{Lem}[Thm]{Lemma}
\newtheorem{Cor}[Thm]{Corollary}
\newtheorem{Prop}[Thm]{Proposition}
\newtheorem{Rem}[Thm]{Remark}
\newcommand{\C}{\mathbb{C}}           
\newcommand{\Z}{\mathbb{Z}}
\newcommand{\Q}{\mathbb{Q}}
\newcommand{\ad}{\text{ad}}
\newcommand{\ch}{\mathrm{ch}\,}
\newcommand{\fa}{{\mathfrak a}}             
\newcommand{\fb}{{\mathfrak b}}
\newcommand{\fg}{{\mathfrak g}}
\newcommand{\fh}{{\mathfrak h}}
\newcommand{\fn}{{\mathfrak n}}
\newcommand{\fp}{{\mathfrak p}}
\newcommand{\hfg}{\hat{\fg}}
\newcommand{\hfh}{\hat{\fh}}
\newcommand{\hfb}{\hat{\fb}}
\newcommand{\hfn}{\hat{\fn}}
\newcommand{\hfp}{\hat{\fp}}
\newcommand{\hfm}{\hat{\fm}}
\newcommand{\hP}{\hat{P}}
\newcommand{\hI}{\hat{I}}
\newcommand{\hW}{\hat{W}}
\newcommand{\ga}{\alpha}
\newcommand{\gl}{\lambda}
\newcommand{\gL}{\Lambda}
\newcommand{\gd}{\delta}
\newcommand{\gD}{\Delta}
\newcommand{\gt}{\theta}
\newcommand{\gs}{\sigma}
\newcommand{\gS}{\Sigma}
\renewcommand{\hat}{\widehat}
\newcommand{\ol}{\overline}
\newcommand{\hV}{\hat{V}}
\newcommand{\msl}{\mathfrak{sl}}
\newcommand{\bfa}{f_{\ga}}
\renewcommand{\hfm}{\hat{\mathfrak{m}}}
\newcommand{\ff}{\mathfrak{f}}
\begin{document}

\title{Defining relations of fusion products and Schur positivity}
\author{Katsuyuki Naoi}
%
\address{%
Katsuyuki \textsc{Naoi} \\
Institute of Engineering \\
Tokyo University of Agriculture and Technology\\
2-24-16 Naka-cho, Koganei-shi, Tokyo 184-8588, JAPAN\\
e-mail: naoik@cc.tuat.ac.jp}

%
%
\subjclass{17B10}
\keywords{current algebra, fusion product, Schur positivity}

\maketitle

\begin{abstract}
 In this note we give defining relations of an $\mathfrak{sl}_{n+1}[t]$-module defined by the fusion product of simple 
 $\mathfrak{sl}_{n+1}$-modules whose highest weights are multiples of a given fundamental weight.
 From this result we obtain a surjective homomorphism between two fusion products,
 which can be considered as a current algebra analog of Schur positivity.
\end{abstract}

\section{Introduction}

Let $\fg = \mathfrak{sl}_{n+1}(\C)$ with index set $I=\{1,\ldots,n\}$, and fix a triangular decomposition
$\fg = \fn_+ \oplus \fh \oplus \fn_-$.
Denote by $\varpi_i$ ($i \in I$) the fundamental weights.
Let $\fg[t]=\fg\otimes \C[t]$ be the associated current algebra.
For $m \in I$ and a sequence $\bm{\ell}=(\ell_1, \ell_2,\ldots,\ell_p)$ of nonnegative integers,
we define a $\fg[t]$-module $V_m(\bm{\ell})$ by
\[ V_m(\bm{\ell}) = V(\ell_1 \varpi_m) *V(\ell_2\varpi_m) * \cdots * V(\ell_p\varpi_m).
\]
Here $V(\gl)$ is the simple $\fg$-module with highest weight $\gl$,
and $*$ denotes the fusion product defined by Feigin and Loktev in \cite{MR1729359}.
We may assume without loss of generality that $\ell_1 \ge \ell_2 \ge \cdots \ge \ell_p$, that is, $\bm{\ell}$ is a partition.

In \cite{MR3296163}, Chari and Venkatesh have introduced a large family of indecomposable $\fg[t]$-modules 
(with $\fg$ a general simple Lie algebra) indexed by a sequence of partitions, in terms of generators and relations.
In this note, we will show that the fusion product $V_m(\bm{\ell})$ is isomorphic to a module belonging to their family.
More explicitly, we show the following defining relations of $V_m(\bm{\ell})$.\\[-5pt]

\par
\noindent
{\bfseries Theorem.}\ \ 
{\itshape 
 Let $m \in I$ and $\bm{\ell} = (\ell_1\ge\cdots\ge\ell_p)$ be a partition. 
 Set $L_i = \ell_i +\cdots +\ell_{p-1}+\ell_p$ for $1 \le i \le p$ and $L_i = 0$ for $i>p$.
 Then $V_m(\bm{\ell})$ is isomorphic to the 
 $\fg[t]$-module generated by a vector $v$ with relations
 \begin{align*}
  \fn_+[t]v=0, \ \ \ (h\otimes t^s) v&= \gd_{s0}L_1\langle h,\varpi_m\rangle v \text{ for } 
  h \in \fh, \ s \in \Z_{\ge 0},\nonumber\\
  (f_\ga\otimes \C[t])v&= 0 \text{ for } \ga \in \gD_+ \text{ with } \langle h_\ga,\varpi_m\rangle = 0,\nonumber\\
  f_\ga^{L_1+1}v&=0 \text{ for } \ga \in \gD_+ \text{ with } \langle h_\ga, \varpi_m\rangle =1, \nonumber\\
  (e_\ga\otimes t)^{s}f_\ga^{r+s}v&=0 \text{ for } \ga \in \gD_+, r,s \in \Z_{>0} \text{ with } \langle h_\ga, \varpi_m 
   \rangle = 1,\nonumber\\
  &\hspace{40pt} r+s \ge 1+kr + L_{k+1} \text{ for some } k\in\Z_{>0}.
 \end{align*}
 Here $\gD_+$ is the set of positive roots, $ h_\ga$ is the coroot corresponding to $\ga$, and $e_\ga$ and $f_\ga$ are 
 root vectors corresponding to $\ga$ and $-\ga$ respectively.}
\par

This theorem for $\fg = \mathfrak{sl}_2$ has been proved in \cite{MR1988973} and \cite{MR3296163}.
In the case $p=2$, this can be found in \cite{MR3336341} and \cite{Fourier} (see also \cite{CSVW}).

Let us introduce a motivation of the theorem.
For that we consider the case $\fg = \mathfrak{sl}_2(\C)$ for a moment.
Let $(\ell_1\ge \ell_2), (r_1\ge r_2)$ be partitions of a positive integer $\ell$.
By the well-known Clebsch-Gordan formula 
\[ V(\ell\varpi_1)\otimes V(r\varpi_1) = V\big(|\ell-r|\varpi_1\big) \oplus \cdots \oplus 
   V\big((\ell+r-2)\varpi_1\big) \oplus V\big((\ell+r)\varpi_1\big),
\]
we see that there exists a surjective $\fg$-module homomorphism 
\[ V(\ell_1\varpi_1)\otimes V(\ell_2\varpi_1) \twoheadrightarrow V(r_1\varpi_1) \otimes V(r_2\varpi_1)
\]
if and only if $\ell_2 \ge r_2$.
This surjection implies that the difference of their characters can be written as a sum of characters of simple $\fg$-modules.
Since the characters of simple $\fg$-modules are known as Schur functions, 
this property is called \textit{Schur positivity}.
Generalization of the surjection to a more general $\fg$ and more general $\fg$-modules has been studied in 
\cite{MR2358610, MR2369890, MR3248990,MR3226992}.
In particular when $\fg = \mathfrak{sl}_{n+1}$, it follows from \cite{MR3248990} (see also \cite{MR2369890})
that for $m \in I$ and two partitions $(\ell_1\ge\cdots\ge \ell_p)$, $(r_1\ge\cdots \ge r_p)$ of a positive integer $\ell$,
there exists a surjective $\fg$-module homomorphism 
\begin{equation}\label{eq:surjection}
 V(\ell_1\varpi_m) \otimes \cdots \otimes V(\ell_p\varpi_m) \twoheadrightarrow V(r_1\varpi_m) \otimes \cdots \otimes 
 V(r_p\varpi_m)
\end{equation}
if $\ell_i + \cdots +\ell_p \geq r_i + \cdots +r_p$ holds for each $1\le i \le p$.

Fourier and Hernandez have raised the following question in the introduction of \cite{MR3226992}: Can surjections
such as (\ref{eq:surjection}) be obtained from surjective $\fg[t]$-module homomorphisms between the 
corresponding fusion products?
(Recall that the $\fg$-module structures of a tensor product and a fusion product are the same.)
By inspecting the defining relations of the theorem we obtain the following corollary,
which gives a positive answer to their question in our setting.

\begin{Cor}
 Let $m \in I$, and $\bm{\ell}=(\ell_1 \geq \cdots \geq \ell_p)$, $\bm{r}=(r_1 \geq \cdots \geq r_p)$ be two partitions of a 
 positive integer $\ell$.
 We assume that $\ell_i + \cdots +\ell_p \geq r_i + \cdots +r_p$ holds for each $1 \le i \le p$.
 Then there exists a surjective $\fg[t]$-module homomorphism
 from $V_m(\bm{\ell})$ onto $V_m(\bm{r})$.
\end{Cor}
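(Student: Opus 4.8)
The plan is to read the Corollary straight off the Theorem by comparing presentations. I would apply the Theorem to both $V_m(\bm{\ell})$ and $V_m(\bm{r})$, obtaining cyclic generators $v$ and $v'$ together with their defining relations, and set $L_i = \ell_i + \cdots + \ell_p$ and $L_i' = r_i + \cdots + r_p$ as in the statement of the Theorem (so $L_i = L_i' = 0$ for $i > p$). Since $V_m(\bm{\ell})$ is presented by $v$ and those relations, to produce a $\fg[t]$-homomorphism $V_m(\bm{\ell}) \to V_m(\bm{r})$ with $v \mapsto v'$ it suffices to check that $v'$ satisfies, inside $V_m(\bm{r})$, every relation imposed on $v$; the resulting homomorphism is then automatically surjective, its image being a $\fg[t]$-submodule that contains the generator $v'$.

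First I would record that, $\bm{\ell}$ and $\bm{r}$ being partitions of the same integer $\ell$, one has $L_1 = L_1' = \ell$. Consequently the relations $\fn_+[t]v = 0$, $(h \otimes t^s)v = \gd_{s0}L_1\langle h, \varpi_m\rangle v$, $(f_\ga \otimes \C[t])v = 0$ for $\ga \in \gD_+$ with $\langle h_\ga, \varpi_m\rangle = 0$, and $f_\ga^{L_1+1}v = 0$ for $\ga \in \gD_+$ with $\langle h_\ga, \varpi_m\rangle = 1$ involve no numerical data beyond $L_1$, which agrees for the two partitions; applying the Theorem to $V_m(\bm{r})$ shows verbatim that $v'$ satisfies each of them.

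Next I would treat the remaining relations $(e_\ga \otimes t)^s f_\ga^{r+s}v = 0$ for $\ga \in \gD_+$ with $\langle h_\ga, \varpi_m\rangle = 1$ and $r, s \in \Z_{>0}$ satisfying $r + s \ge 1 + kr + L_{k+1}$ for some $k \in \Z_{>0}$. The hypothesis $\ell_i + \cdots + \ell_p \ge r_i + \cdots + r_p$ for all $i$ says precisely that $L_{k+1} \ge L_{k+1}'$ for every $k \in \Z_{>0}$, so $r + s \ge 1 + kr + L_{k+1}$ forces $r + s \ge 1 + kr + L_{k+1}'$; thus the same pair $(r,s)$ meets the condition occurring in the presentation of $V_m(\bm{r})$, and the Theorem yields $(e_\ga \otimes t)^s f_\ga^{r+s}v' = 0$ there. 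Combining the two cases, $v'$ satisfies all the defining relations of $V_m(\bm{\ell})$, and the Corollary follows.

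I do not anticipate a genuine obstacle here: the content is entirely carried by the Theorem, and the Corollary is a formal consequence of it once the presentation is available. The only points needing care are the bookkeeping identities $L_1 = L_1' = \ell$ and the monotonicity $L_{k+1} \ge L_{k+1}'$, together with the observation — really the whole mechanism — that the defining conditions on $(r,s)$ become \emph{weaker}, not stronger, as the $L_{k+1}$ decrease, which is exactly the direction supplied by the hypothesis.
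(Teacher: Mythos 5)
Your proof is correct and is exactly the argument the paper intends: the Corollary is obtained by inspecting the presentation in the Theorem, noting $L_1=L_1'=\ell$ and $L_{k+1}\ge L_{k+1}'$ so that every defining relation of $V_m(\bm{\ell})$ is among (or implied by) those of $V_m(\bm{r})$, which yields the map $v\mapsto v'$, surjective since $v'$ generates. No gap to report.
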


It would be an interesting problem to generalize the theorem to a more general $\fg$ or more general modules.
These will be studied elsewhere.

The organization of this paper is as follows.
We fix basic notations in Subsection \ref{Subsection:preliminary},
and recall the definition of fusion products in Subsection \ref{Subsection:Fusion_product}.
By \cite{MR2964614} $V_m(\bm{\ell})$ can be realized as a $\fg[t]$-submodule of a module over the affine Lie algebra $\hfg$,
which is recalled in Subsection \ref{Subsection:Realization}.
In Subsection \ref{Subsection:Chari-Venkatesh}, we recall some results in \cite{MR3296163} needed for the proof of 
the main theorem, and show one technical lemma.
Then we prove the theorem in Section \ref{Section:proof} by determining the defining relations recursively
using the realization, in which we apply the method used in \cite{MR3120578}.

\section{Preliminaries}

\subsection{Simple Lie algebra, current algebra, and affine Kac-Moody Lie algebra of type 
{\boldmath$A$}}\label{Subsection:preliminary}

Let $\fg = \mathfrak{sl}_{n+1}(\C)$ with index set $I=\{1,\ldots,n\}$.
We fix a triangular decomposition $\fg = \fn_+ \oplus \fh \oplus \fn_-$.
Let $\ga_i$ and $\varpi_i$ ($i\in I$) be simple roots and fundamental weights respectively.
We use the labeling in \cite[Section 4.8]{MR1104219}.
For convenience we set $\varpi_0=0$.
Let $\gD$ be the root system, $\gD_+$ the set of positive roots, $W$ the Weyl group with simple reflections 
$\{s_i\mid i\in I\}$ and longest element $w_0$,
and $(\ , \ )$ the unique non-degenerate
$W$-invariant symmetric bilinear form on $\fh^*$ satisfying $(\ga,\ga) = 2$ for all $\ga \in \gD$.
Let 
\[ \gt = \ga_1 + \cdots + \ga_{n-1}+\ga_n
\]
be the highest root.
For each $\ga \in \gD$, let $ h_\ga$ be its coroot, $\fg_\ga$ the corresponding root space,
and $e_{\ga} \in \fg_\ga$ a root vector satisfying $[e_\ga,e_{-\ga}] = h_\ga$.
We also use the notations $f_\ga = e_{-\ga}$ for $\ga \in \gD_+$, $h_i = h_{\ga_i}$, $e_i = e_{\ga_i}$ and $f_i = f_{\ga_i}$.
Denote by $P$ the weight lattice, by $P_+$ the set of dominant integral weights, 
and by $V(\gl)$ ($\gl \in P_+$) the simple $\fg$-module with highest weight $\gl$.
For $i \in I$, set 
\[ i^* = n+1-i \in I.
\]
Note that $w_0(\varpi_i) = -\varpi_{i^*}$ holds.

Given a Lie algebra $\fa$, its \textit{current algebra} $\fa[t]$ is defined by the tensor product $\fa \otimes \C[t]$
equipped with the Lie algebra structure given by 
\[ [x\otimes f(t), y \otimes g(t)] = [x,y] \otimes f(t)g(t).
\]
For $k\in \Z_{> 0}$, let $t^k\fa[t]$ denote the ideal $\fa \otimes t^k\C[t] \subseteq \fa \otimes \C[t]$.

Let $\hfg = \fg\otimes \C[t,t^{-1}] \oplus \C c \oplus \C d$ be the nontwisted affine Lie algebra associated with $\fg$.
Here $c$ is the canonical central element and $d$ is the degree operator.
Note that $\fg$ and $\fg[t]$ are naturally considered as Lie subalgebras of $\hfg$.
Let $\hI = I \sqcup \{0\}$, and define Lie subalgebras $\hfh$, $\hfn_+$, and $\hfb$ as follows:
\[ \hfh = \fh \oplus \C c \oplus \C d, \ \ \hfn_+ = \fn_+ \oplus t\fg[t], \ \ 
   \hfb = \hfh \oplus \hfn_+.
\]
We also define $\hfh_{\mathrm{cl}} = \fh \oplus \C c$.
We often consider $\fh^*$ (resp.\ $\hfh_{\mathrm{cl}}^*$) as a subspace of $\hfh^*$ by setting 
\[ \langle c,\gl\rangle = \langle d,\gl\rangle = 0 \text{ for }  \gl \in \fh^* \ \ \ \big(\text{resp.}\ 
   \langle d,\gl\rangle = 0 \text{ for }  \gl \in \hfh_{\mathrm{cl}}^*\big).
\]
Let $\hat{\gD}$ be the root system of $\hfg$, $\hP$ the weight lattice, $\hP_+$ the set of dominant integral weights, 
and $\hat{W}$ the Weyl group with simple reflections $\{s_i \mid i \in \hI\}$.
Denote by $\gd\in \hP$ the null root, and by $\gL_0 \in \hP$ the unique element satisfying
\[ \langle \fh,\gL_0\rangle = \langle d, \gL_0 \rangle = 0, \ \ \ \langle c, \gL_0 \rangle = 1.
\]
Let $\ga_0 = \gd -\gt$, $e_0 = f_{\gt} \otimes t$ and $f_0 = e_{\gt} \otimes t^{-1}$.
Given an integrable $\hfg$-module $M$ and $i \in \hI$, define a linear automorphism $\Phi_i^M$ on $M$ by
\[ \Phi^M_i = \mathrm{exp}(f_i)\mathrm{exp}(-e_i)\mathrm{exp}(f_i),
\]
see \cite[Lemma 3.8]{MR1104219}.
For each $w\in \hW$ fix a reduced expression $w=s_{i_k}\cdots s_{i_1}$, and set
$\Phi^M_w=\Phi^M_{i_k}\cdots \Phi^M_{i_1}$. 
Then $\Phi^M_w$ satisfies
\begin{equation*}
 \Phi^M_w(M_\mu) = M_{w(\mu)} \ \text{ for } \mu \in \hP, \ \ \ \Phi_w^M \hfg_\ga (\Phi_w^M)^{-1}
  = \hfg_{w(\ga)} \text{ for } \ga \in \hat{\gD}.
\end{equation*}
In particular by considering the adjoint representation, an algebra automorphism on $U(\hfg)$ 
is defined for each $w \in \hW$, which is denoted by $\Phi_w$.
Note that $\Phi^M_w$ for $w \in W$ is also defined on a finite-dimensional $\fg$-module $M$.

Define $t_\gl \in \mathrm{GL}(\hfh^*)$ for $\gl \in P$ by
\begin{equation*}
  t_{\gl}(\mu) = \mu + \langle c,\mu \rangle \gl - \big((\mu, \gl) + 
  \frac{1}{2}(\gl, \gl)\langle  c, \mu \rangle\big)\gd,
\end{equation*}
see \cite[Chapter 6]{MR1104219}.
Let $T(P) = \{t_\gl \mid \gl \in P\}$ and $\widetilde{W} = W \ltimes T(P)$, 
which is called the \textit{extended affine Weyl group}.
Here $w \in W$ and $t_\gl \in T(P)$ satisfy $wt_\gl w^{-1}=t_{w(\gl)}$.
For $i \in \hI$, let 
\begin{equation*}
 \tau_i = t_{\varpi_i}w_{i,0}w_0 \in \widetilde{W}
\end{equation*}
where $w_{i,0}$ is the longest element of $W_{\varpi_i}$,
the stabilizer of $\varpi_i$ in $W$.
We have 
\begin{equation}\label{eq:rule_of_tau}
 \tau_i(\gd) = \gd, \ \ \ \tau_i(\ga_j) = \ga_{\ol{i+j}}, \ \text{ and } \ \tau_i(\varpi_j+\gL_0) \equiv \varpi_{\ol{i+j}}+
  \gL_0 \ \text{ mod } \Q\gd \ \text{ for } j \in \hI
\end{equation}
where $\ol{i+j} \equiv i+j$ mod $n+1$.
Set $\gS = \{\tau_i \mid i\in \hI\}$.
It is known that $\widetilde{W} = \hW\rtimes \gS $.
We define an action of $\gS$ on $\hfg$ by letting $\tau_i$ act as a Lie algebra automorphism determined by 
\begin{align*}
 \tau_i(e_j) = e_{\ol{i+j}}, \ \ \tau_i(f_j) = f_{\ol{i+j}} \text{ for }  j \in \hI, \ \
 \langle \tau_i(h), \tau_i(\gl)\rangle = \langle h,\gl\rangle \text{ for } h \in \hfh, \gl \in \hfh^*.
\end{align*}

\subsection{Fusion product}\label{Subsection:Fusion_product}

Let us recall the definition of the fusion product introduced in \cite{MR1729359}.
Note that $U(\fg[t])$ has a natural $\Z_{\ge 0}$-grading defined by
\[ U(\fg[t])^k = \{X \in U(\fg[t]) \mid [d,X] = kX\}.
\]
Let $\gl_1,\ldots,\gl_p$ be a sequence of elements of $P_+$, and $c_1,\ldots,c_p$ pairwise distinct complex numbers.
We define a $\fg[t]$-module structure on $V(\gl_i)$ as follows:
\[ \big(x\otimes f(t)\big) v = f(c_i)xv \ \text{ for } x \in \fg, f(t) \in \C[t], v \in V(\gl_i).
\]
Denote this $\fg[t]$-module by $V(\gl_i)_{c_i}$.
Let $v_i$ be a highest weight vector of $V(\gl_i)$. 
Then the $\fg[t]$-module $V(\gl_1)_{c_1} \otimes \cdots \otimes V(\gl_p)_{c_p}$ is 
generated by $v_1 \otimes \cdots \otimes v_p$ (see \cite{MR1729359}), 
and the grading on $U(\fg[t])$ induces a filtration on $V(\gl_1)_{c_1}\otimes \cdots \otimes V(\gl_p)_{c_p}$ by
\[ \Big(V(\gl_1)_{c_1}\otimes \cdots \otimes V(\gl_p)_{c_p}\Big)^{\leq k} = \sum_{r \leq k} 
   U(\fg[t])^r(v_1\otimes \cdots \otimes v_p).
\]
Now the $\fg[t]$-module obtained by taking the associated graded is denoted by 
\[ V(\gl_1) * \cdots * V(\gl_p),
\] 
and called the \textit{fusion product} of $V(\gl_1),\ldots,V(\gl_p)$.
Though the definition depends on the parameters $c_i$, we omit them for the notational convenience.
All fusion products appearing in this paper do not depend on the parameters up to isomorphism.
Note that, by definition, we have
\[ V(\gl_1) * \cdots * V(\gl_p) \cong V(\gl_1) \otimes \cdots \otimes V(\gl_p)
\]
as a $\fg$-module.

\subsection{Another realization of fusion products}\label{Subsection:Realization}

Kirillov-Reshetikhin modules for $\fg[t]$ are $\fg[t]$-modules defined in terms of generators and relations,
which have been introduced in \cite{MR2238884}.
In \cite{MR2964614} the fusion products of Kirillov-Reshetikhin modules for $\fg[t]$
were studied when $\fg$ is of type $ADE$, and a new realization of these modules using Joseph functors was given.
When $\fg$ is of type $A$, a Kirillov-Reshetikhin module is just the evaluation module at $t=0$ of $V(k\varpi_i)$ 
with $k\in \Z_{> 0}$ and $i \in I$,
and hence their fusion products are what we are studying in this note. 
In this subsection we will reformulate the result of \cite{MR2964614} in type $A$ in a different way
(see Remark \ref{Rem}).
This formulation has previously been used in \cite{MR3120578}, and is more suitable for later use since we can apply Lemma 
\ref{Lem:proceed} stated below.

First we introduce several notations.
Assume that $V$ is a $\hfg$-module and $D$ is a $\hfb$-submodule of $V$.
For $\tau \in \gS$, denote by $F_\tau V$ the pull-back $(\tau^{-1})^*V$ with respect to the Lie algebra 
automorphism $\tau^{-1}$ on $\hfg$, and define a $\hfb$-submodule $F_\tau D \subseteq F_\tau V$ in the obvious way.
For $i \in \hI$ let $\hfp_i$ denote the parabolic subalgebra $\hfb \oplus \C f_i \subseteq \hfg$,
and set $F_iD= U(\hfp_i)D \subseteq V$ to be the $\hfp_i$-submodule generated by $D$. 
Finally we define $F_w D$ for $w \in \widetilde{W}$ as follows:
let $\tau \in \gS$ and $w' \in \hat{W}$ be the elements such that $w=w'\tau$, and choose a reduced 
expression $w'=s_{i_k} \cdots s_{i_1}$. 
Then we set
\[ F_{w}D = F_{i_k}\cdots F_{i_1} F_\tau D\subseteq F_\tau V.
\]
Though the definition depends on the choice of the expression of $w'$, we use $F_w$ by an abuse of notation.

For $\gL \in \hP_+$ let $\hV(\gL)$ be the simple highest weight $\hfg$-module with highest weight $\gL$.
Denote by $\C_{\gL}$ the $1$-dimensional $\hfb$-submodule of $\hV(\gL)$ spanned 
by a highest weight vector.
Note that $F_\tau\hV(\gL) \cong \hV(\tau\gL)$ and $F_\tau \C_{\gL} \cong \C_{\tau\gL}$ for $\tau \in \gS$.
Let 
\begin{equation*}
 \hfb' = \hfb\cap \fg[t]= \fh \oplus \hfn_+.
\end{equation*}
Now \cite[Theorem 6.1]{MR2964614} is reformulated as follows.

\begin{Thm}\label{Thm:realization}
 Let $\bm{\ell} = (\ell_1 \ge \cdots\ge\ell_p)$ be a partition, and $m_1,\ldots,m_p$ a sequence of elements 
 of $I$.
 As a $\hfb'$-module, we have
 \begin{align*}
  V(\ell_1\varpi_{m_1}) * \cdots& * V(\ell_p\varpi_{m_p})\\
  &\cong F_{t_{-\varpi_{m_1^*}}}\Big(\C_{(\ell_1-\ell_2)\gL_0} \otimes \cdots
  \otimes F_{t_{-\varpi_{m_{p-1}^*}}}\big(\C_{(\ell_{p-1}-\ell_p)\gL_0} \otimes F_{t_{-\varpi_{m_p^*}}}\C_{\ell_p\gL_0}\big)\!
  \cdots \!\Big).
 \end{align*}
\end{Thm}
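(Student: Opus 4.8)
The statement is a transcription of \cite[Theorem 6.1]{MR2964614} into the functorial language of the $F_w$ set up above, so the plan is to recall that result and supply the dictionary. \emph{Step 1 (reduction to Kirillov--Reshetikhin modules).} In type $A$ the Kirillov--Reshetikhin module $W(\ell\varpi_m)$ of \cite{MR2238884} is nothing but the evaluation module obtained from $V(\ell\varpi_m)$ by letting $\fg[t]$ act through $t=0$; hence the fusion products studied in \cite{MR2964614} are exactly the modules $V(\ell_1\varpi_{m_1})*\cdots*V(\ell_p\varpi_{m_p})$, and it suffices to match the two descriptions.

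\emph{Step 2 (the Joseph--functor realization).} Recall that \cite[Theorem 6.1]{MR2964614} realizes the fusion product inside a tensor product of level-one integrable $\hfg$-modules $\hV(\gL_0)$, equivalently as an iterated Demazure/Joseph construction applied to one-dimensional $\hfb$-modules of the form $\C_{k\gL_0}$, the successive steps being governed by the elements $\tau_{m_i}\in\gS$ and the increments $\ell_i-\ell_{i+1}\ge 0$ that are made available by the partition hypothesis $\ell_1\ge\cdots\ge\ell_p$. The nested shape of the right-hand side then comes from the associativity of the fusion product together with the compatibility of the Joseph functors with tensoring by a one-dimensional $\hfb$-module: the innermost factor $F_{t_{-\varpi_{m_p^*}}}\C_{\ell_p\gL_0}$ realizes $V(\ell_p\varpi_{m_p})$ inside $\hV(\ell_p\gL_0)$; tensoring with $\C_{(\ell_{p-1}-\ell_p)\gL_0}$ raises the level to $\ell_{p-1}$; applying $F_{t_{-\varpi_{m_{p-1}^*}}}$ produces $V(\ell_{p-1}\varpi_{m_{p-1}})*V(\ell_p\varpi_{m_p})$; and one iterates. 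The base case $p=1$ is the assertion that $F_{t_{-\varpi_{m^*}}}\C_{k\gL_0}$ is, as a $\hfb'$-module, the Demazure module realizing the $t=0$ evaluation module of $V(k\varpi_m)$.

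\emph{Step 3 (reconciling conventions).} Here one works with $\hfb=\hfh\oplus\fn_+\oplus t\fg[t]$ and records the answer as a $\hfb'$-module, whereas \cite{MR2964614} uses a different normalization; this discrepancy is absorbed by the longest element $w_0$ and the involution $i\mapsto i^*$, which is precisely why $\varpi_{m_i^*}$ rather than $\varpi_{m_i}$ appears in the translations $t_{-\varpi_{m_i^*}}$ (recall $w_0(\varpi_i)=-\varpi_{i^*}$). One also verifies, using \eqref{eq:rule_of_tau}, that the image of $t_{-\varpi_{m^*}}$ in $\widetilde{W}=\hW\rtimes\gS$ under the projection onto $\gS$ is $\tau_m$, so that the functor $F_{t_{-\varpi_{m^*}}}$ in the sense defined above is the right object.

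The main obstacle is not conceptual but lies in controlling this dictionary precisely: decomposing each $t_{-\varpi_{m_i^*}}$ as $w'\tau$ with $w'\in\hW$ and $\tau\in\gS$, checking that the resulting $F_{t_{-\varpi_{m_i^*}}}$ is independent --- up to the claimed isomorphism --- of the reduced expression chosen for $w'$, and making sure that passing from the (a priori $\fg[t]$- or $\hfg$-level) statement of \cite{MR2964614} to the $\hfb'$-module statement here loses nothing. The latter is all that is needed for the recursive determination of defining relations in Section \ref{Section:proof}, where it is exactly the parabolic direction $\C f_i\subseteq\hfp_i$ that feeds into Lemma \ref{Lem:proceed}.
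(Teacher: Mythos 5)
You correctly identify that Theorem \ref{Thm:realization} is a reformulation of \cite[Theorem 6.1]{MR2964614} (and your type-$A$ reduction to evaluation modules and the bookkeeping with $w_0(\varpi_i)=-\varpi_{i^*}$ and the $\gS$-component $\tau_m$ of $t_{-\varpi_{m^*}}$ are all consistent with the paper). But the actual mathematical content of the paper's proof is exactly the step you defer to at the end as ``the main obstacle'': showing that the module built from Joseph functors in \cite{MR2964614} is isomorphic, as a $\hfb$- (hence $\hfb'$-) module, to the iterated $F_{t_{-\varpi_{m^*}}}$-construction used here. You describe the dictionary and appeal to ``associativity of the fusion product together with the compatibility of the Joseph functors with tensoring by a one-dimensional $\hfb$-module,'' but neither of these is established or needed in the paper, and neither closes the comparison; in particular no associativity of fusion products enters, since the nested structure is already part of the cited theorem. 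What is missing is the two-step argument of Remark \ref{Rem}: (a) by the universal property of the Joseph functors there is a surjective homomorphism from the Joseph-functor module onto $F_{t_{-\varpi_{m_1^*}}}\bigl(\C_{(\ell_1-\ell_2)\gL_0}\otimes\cdots\bigr)$, and (b) the two modules have the same $\hfh$-character, by \cite[Corollary 6.2]{MR2964614} combined with the Demazure-operator character formula of \cite[Theorem 5]{MR1887117} applied to the $F_i$-construction, so the surjection is an isomorphism (cf.\ the analogous argument below Lemma 5.2 of \cite{MR3120578}).

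The same character comparison is also what disposes of the issue you raise about independence of the reduced expression of $w'$ in $t_{-\varpi_{m^*}}=\gs\tau_m$: a priori $F_{w}$ depends on the chosen expression, and without the surjection-plus-character argument (or some substitute) you have not shown that the particular module on the right-hand side of Theorem \ref{Thm:realization} --- rather than some other module attached to a different expression --- agrees with the fusion product. As it stands, your proposal names the gap but does not fill it, so it does not constitute a proof.
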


\begin{Rem}\label{Rem}\normalfont
 In \cite[Theorem 6.1]{MR2964614} the right-hand side is defined in terms of Joseph functors, but it can easily be proved to 
 be isomorphic to the right-hand side of Theorem \ref{Thm:realization} as follows.
 By the universality of Joseph functors, there exists a surjection between two modules.
 Moreover their characters coincide by \cite[Corollary 6.2]{MR2964614} and \cite[Theorem 5]{MR1887117},
 and hence they are isomorphic. (See \cite[a paragraph below Lemma 5.2]{MR3120578} for more detail, 
in which a similar argument is given.)
\end{Rem}

For $i \in \hI$, let $\hfn_i$ be the nilradical of $\hfp_i$. 
More explicitly, $\hfn_i$ is defined as follows:
\[ \hfn_i = \bigoplus_{\ga \in \gD_+\setminus \{\ga_i\}} \C e_\ga \oplus t\fg[t] \ (i \in I), \ 
   \hfn_0 = \fn_+ \oplus \bigoplus_{\ga \in \gD\setminus\{-\theta\}} \C (e_\ga \otimes t) \oplus (\fh\otimes t)\oplus
   t^2\fg[t].
\]
The following lemma is useful to determine defining relations of modules constructed using $F_w$'s.
For the proof, see \cite[Lemma 5.3]{MR3120578}.

\begin{Lem}\label{Lem:proceed}
  Let $V$ be an integrable $\hfg$-module, $T$ a finite-dimensional $\hfb$-submodule of $V$,
  $i \in \hI$ and $\xi \in \hP$ such that $\langle h_i, \xi \rangle \ge 0$.
  Assume that the following conditions hold:\\
  {\normalfont(i)}
    $T$ is generated by a weight vector $v \in T_\xi$ satisfying $e_i v=0$.\\
  {\normalfont(ii)}
    There is an $\ad(e_i)$-invariant left $U(\hfn_i)$-ideal $\mathcal{I}$ such that 
    \[ \mathrm{Ann}_{U(\hfn_+)}v = U(\hfn_+)e_i + U(\hfn_+)\mathcal{I}.
    \]
  {\normalfont(iii)} We have $\ch F_i T = \mathcal{D}_i \ch T$, where $\mathrm{ch}$ denotes the character with respect to 
   $\hfh$, and $\mathcal{D}_i$ is the Demazure operator defined by
   \[ \mathcal{D}_i(f) = \frac{f-e^{-\ga_i}s_i(f)}{1-e^{-\ga_i}}.
   \]
  Let $v' = f_i^{\langle h_i, \xi \rangle} v$. Then we have
  \[ \mathrm{Ann}_{U(\hfn_+)} v'= U(\hfn_+)e_i^{\langle h_i, \xi \rangle+1} + U(\hfn_+)\Phi_i( \mathcal{I}).
  \]
\end{Lem}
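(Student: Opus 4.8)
The plan is to prove the two inclusions of the asserted identity separately. Throughout put $N := \langle h_i,\xi\rangle\ (\ge 0)$ and
\[
J := U(\hfn_+)e_i^{N+1} + U(\hfn_+)\Phi_i(\mathcal I),
\]
so the goal is $\mathrm{Ann}_{U(\hfn_+)}v' = J$. First I would record the facts making the inclusion $J\subseteq\mathrm{Ann}_{U(\hfn_+)}v'$ routine. Since $e_iv=0$, $h_iv=Nv$ and $V$ is integrable, $v$ generates an $(N+1)$-dimensional module over the $\mathfrak{sl}_2$-triple $(e_i,h_i,f_i)$ whose lowest weight vector is $f_i^Nv = v'$; as $\Phi^V_i$ carries the weight-$\xi$ line of this submodule onto its weight-$s_i\xi$ line $\C v'$, we get $v' = c\,\Phi^V_i(v)$ with $c\ne 0$, hence $\mathrm{Ann}_{U(\hfg)}v' = \Phi_i\bigl(\mathrm{Ann}_{U(\hfg)}v\bigr)$. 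Because $s_i$ permutes $\hat\gD_+\setminus\{\ga_i\}$, the automorphism $\Phi_i$ preserves $U(\hfn_i)$, so $\Phi_i(\mathcal I)\subseteq U(\hfn_i)\subseteq U(\hfn_+)$; together with $e_i^{N+1}v'=0$ and $\Phi_i(X)v' = c\,\Phi_i(X)\Phi^V_i(v) = c\,\Phi^V_i(Xv) = 0$ for $X\in\mathcal I$, this gives $J\subseteq\mathrm{Ann}_{U(\hfn_+)}v'$. Moreover $U(\hfn_+)v' = F_iT$: applying $e_i^k$ to $v'$ returns a nonzero multiple of $f_i^{N-k}v$ for $0\le k\le N$, so $U(\hfn_+)v'$ contains every $f_i^kv$, hence all of $U(\hfp_i)v = F_iT$; the reverse inclusion is clear. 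Thus we obtain a surjection of $\hfh$-graded $U(\hfn_+)$-modules $U(\hfn_+)/J \twoheadrightarrow F_iT$, $\bar 1\mapsto v'$; since $F_iT$ is finite-dimensional, to conclude it suffices to prove the reverse inequality of graded dimensions, after which this surjection must be an isomorphism and $\mathrm{Ann}_{U(\hfn_+)}v' = J$.

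To set up that inequality I would unwind both sides. Using the PBW decomposition $U(\hfn_+) = \bigoplus_{k\ge 0}U(\hfn_i)e_i^k$ (valid since $\hfn_i$ is an ideal of $\hfn_+$) together with the $\ad(e_i)$-invariance of $\mathcal I$, which lets powers of $e_i$ be pushed past $\mathcal I$, hypothesis (ii) gives $\mathrm{Ann}_{U(\hfn_+)}v = \mathcal I \oplus \bigoplus_{k\ge 1}U(\hfn_i)e_i^k$, so $\ch T = e^\xi\,\ch\bigl(U(\hfn_i)/\mathcal I\bigr)$; writing $g := \ch\bigl(U(\hfn_i)/\mathcal I\bigr)$, hypothesis (iii) then reads $\ch F_iT = \mathcal D_i(\ch T) = e^\xi\,\mathcal D_i^{\langle N\rangle}(g)$, where $\mathcal D_i^{\langle N\rangle}(f) := \bigl(f - e^{-(N+1)\ga_i}s_i(f)\bigr)/(1-e^{-\ga_i})$. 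On the other side, applying $\Phi_i$ — a $\hfh$-graded isomorphism twisting weights by $s_i$ — carries $\hfn_+ = \C e_i\oplus\hfn_i$ to $\fa := \C f_i\oplus\hfn_i$ and $J$ to $J' := U(\fa)f_i^{N+1} + U(\fa)\mathcal I$, so $\ch\bigl(U(\hfn_+)/J\bigr)$ equals the $s_i$-twist of $\ch\bigl(U(\fa)/J'\bigr)$. Tracking the gradings (and using that $\mathcal D_i(\ch T)$ is $s_i$-invariant) the required bound reduces to the character inequality
\[
\ch\bigl(U(\fa)/J'\bigr)\ \le\ \mathcal D_i^{\langle N\rangle}(g).
\]

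This estimate is where the real work lies, and it is the step I expect to be the main obstacle. Here $U(\fa) = \bigoplus_{k\ge 0}U(\hfn_i)f_i^k$ (again $\hfn_i$ is an ideal), and $J'$ contains $\bigoplus_{k\ge N+1}U(\hfn_i)f_i^k$ together with $U(\fa)\mathcal I$; the difficulty is precisely that $\mathcal I$ is $\ad(e_i)$-invariant but \emph{not} $\ad(f_i)$-invariant, so commuting powers of $f_i$ past $\mathcal I$ produces genuinely new elements $(\ad f_i)^j(\mathcal I)\subseteq U(\hfn_i)$ that one must show lie in $J'$ once one works modulo $\bigoplus_{k\ge N+1}U(\hfn_i)f_i^k$. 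The way to control this is to return to the $\mathfrak{sl}_2^{(i)}$-picture: writing $\hfp_i = \hfn_i\rtimes\bigl(\mathfrak{sl}_2^{(i)}\oplus\mathfrak z\bigr)$, one checks $F_iT = U(\hfp_i)v = U(\mathfrak{sl}_2^{(i)})T$, which is a quotient of the induced module $U(\mathfrak{sl}_2^{(i)})\otimes_{U(\C e_i\oplus\C h_i)}T$; by hypothesis (iii) it must be \emph{precisely} the maximal integrable quotient (the Demazure module of $T$), on which $v$ satisfies only the relations $e_iv=0$, the $\hfh$-relations, $\mathcal Iv=0$, and the truncation $f_i^{N+1}v=0$. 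Semisimplicity of finite-dimensional $\mathfrak{sl}_2^{(i)}$-modules then allows one to verify that these relations already cut $U(\fa)$ down to exactly the Demazure size, the relation $f_i^{N+1}v=0$ accounting for the factor $e^{-(N+1)\ga_i}$ in $\mathcal D_i^{\langle N\rangle}$. Checking carefully that no further relations are forced — equivalently, that the surjection $U(\hfn_+)/J\twoheadrightarrow F_iT$ above is injective — is the delicate point; granting it, the desired annihilator identity follows.
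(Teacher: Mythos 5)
Your setup is sound and follows the natural route: from $v'=c\,\Phi_i^V(v)$ you get the inclusion $U(\hfn_+)e_i^{N+1}+U(\hfn_+)\Phi_i(\mathcal I)\subseteq \mathrm{Ann}_{U(\hfn_+)}v'$ (here $N=\langle h_i,\xi\rangle$), the identification $U(\hfn_+)v'=F_iT$ is correct, and the problem is thereby reduced, via hypothesis (iii), to the character bound $\ch\big(U(\hfn_+)/J\big)\le\mathcal D_i\,\ch T$, i.e.\ to the injectivity of the surjection $U(\hfn_+)/J\twoheadrightarrow F_iT$. Note that the paper itself does not prove the lemma but refers to \cite[Lemma 5.3]{MR3120578}, whose proof consists essentially of establishing exactly this bound; so everything you have actually verified is the routine half.

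The difficulty is that your argument stops at the point where the lemma has content. The estimate you still owe, $\ch\big(U(\fa)/J'\big)\le\mathcal D_i^{\langle N\rangle}(g)$ with $g=\ch\big(U(\hfn_i)/\mathcal I\big)$, is never proved: the cheap PBW counts (powers of $f_i$ on one side, $\mathcal I$ absorbed on the other, plus some commutator bookkeeping) give at best $\sum_{k=0}^{N}e^{-k\ga_i}g$, which in general strictly exceeds $\big(g-e^{-(N+1)\ga_i}s_i(g)\big)/(1-e^{-\ga_i})$, and closing that discrepancy is precisely the obstacle you yourself flag (the failure of $\ad(f_i)$-invariance of $\mathcal I$). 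What you offer in its place is not an argument: asserting that by (iii) the module $F_iT$ ``satisfies only the relations $e_iv=0$, $\mathcal I v=0$, $f_i^{N+1}v=0$'' is a restatement of the conclusion, since (iii) fixes the character of $F_iT$ but says nothing about whether the abstract cyclic module $U(\hfn_+)/J$ is that small; and ``semisimplicity of finite-dimensional $\mathfrak{sl}_2$-modules'' cannot be applied to $U(\fa)/J'$, which carries no $e_i$-action and whose size is exactly the unknown. Your final sentence --- that the delicate point is the injectivity of $U(\hfn_+)/J\twoheadrightarrow F_iT$ and that ``granting it'' the identity follows --- is circular, because that injectivity \emph{is} the identity $\mathrm{Ann}_{U(\hfn_+)}v'=J$. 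So the essential step (the actual content of \cite[Lemma 5.3]{MR3120578}) is missing. Two smaller points: it is $\Phi_i^{-1}$ rather than $\Phi_i$ that carries $J$ to $J'=U(\fa)f_i^{N+1}+U(\fa)\mathcal I$ (or one must check that $\Phi_i^2$ preserves $\mathcal I$, e.g.\ by taking $\mathcal I$ to be $\hfh$-graded), and the $\hfh$-gradedness of $\mathcal I$ should be made explicit since your character bookkeeping uses it.
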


\subsection{Presentation by Chari and Venkatesh}\label{Subsection:Chari-Venkatesh}

Following \cite{MR3296163}, we introduce some notations.
For $r,s \in\Z_{\ge 0}$, let
\[ \mathbf{S}(r,s)=\Big\{(b_j)_{j\ge 0}\Bigm| b_j \in \Z_{\ge 0}, \ \sum_j b_j = r, \ \sum_j jb_j = s\Big\}.
\]
Note that $\mathbf{S}(0,s) = \emptyset$ if $s > 0$, and if $(b_j)_{j\ge 0} \in \mathbf{S}(r,s)$ then
$b_j = 0$ for $j>s$.
For $x \in \fg$ and $r,s\in \Z_{\ge 0}$, define a vector $x(r,s) \in U(\fg[t])$ by
\[ x(r,s) = \sum_{(b_j)_{j\ge 0}\in \mathbf{S}(r,s)} (x\otimes 1)^{(b_0)}(x\otimes t)^{(b_1)} \cdots 
   (x\otimes t^s)^{(b_s)},
\]
where for $X \in \fg[t]$, $X^{(b)}$ denotes the divided power $X^b/b!$.
We understand $x(r,s) = 0$ if $\mathbf{S}(r,s)=\emptyset$. 
For $\ga \in \gD_+$, define Lie subalgebras $\mathfrak{sl}_{2,\ga}$ and $\fb_{\ga}$ of $\fg$ by 
\[ \msl_{2,\ga}= \C e_\ga \oplus \C  h_\ga \oplus \C f_{\ga},\ \ \ \fb_{\ga}=\C e_\ga\oplus \C h_\ga.
\]
We also define a Lie subalgebra $\hfm_\ga$ of $\msl_{2,\ga}[t]$ by
\[ \hfm_\ga = t\msl_{2,\ga}[t] \oplus \C f_\ga.
\]
By \cite{MR502647} (see also \cite[Lemma 1.3]{MR1850556}), we have
\begin{align}\label{eq:Garland}
 (e_\ga\otimes t)^{(s)}f_{\ga}^{(r+s)} - &(-1)^sf_{\ga}(r,s)\in U(\hfm_\ga)t\fb_\ga[t].
\end{align}
For $k \in \Z_{\ge 0}$, let $\mathbf{S}(r,s)_k$ (resp.\ ${}_k\mathbf{S}(r,s)$) be the subset of $\mathbf{S}(r,s)$ consisting
of elements $(b_j)_{j\ge0}$, satisfying
\[ b_j =0 \ \text{ for } \ j \ge k \ \ \ (\text{resp.\ } b_j = 0 \ \text{ for } \ j<k).
\]
For $x\in \fg$, define a vector $x(r,s)_k$ and ${}_kx(r,s)$ by
\begin{align*}
 x(r,s)_k &= \sum_{(b_j)_{j\ge 0}\in \mathbf{S}(r,s)_k} (x\otimes 1)^{(b_0)}(x\otimes t)^{(b_1)} \cdots 
  (x\otimes t^{k-1})^{(b_{k-1})},\\ 
 {}_kx(r,s)&= \sum_{(b_j)_{j\ge0}\in {}_k\mathbf{S}(r,s)} (x\otimes t^k)^{(b_k)}(x\otimes t^{k+1})^{(b_{k+1})} \cdots 
  (x\otimes t^{s})^{(b_s)}.
\end{align*}
The following was proved in \cite{MR3296163}.

\begin{Lem}\label{Lem:Chari-Venkatesh}
 {\normalfont(i)} Let $x \in \fg$. If $r,s,k\in \Z_{>0}$ and $K \in \Z_{\ge 0}$ satisfy $r+s\ge kr+K$, then
 \[ x(r,s)={}_kx(r,s) + \sum_{(r',s')} x(r-r',s-s')_k\cdot {}_kx(r',s'),
 \]
 where the sum is over all pairs $r',s' \in \Z_{\ge 0}$ such that $r' < r, s'\le s$ and $r'+s' \ge kr' + K$.\\
 \noindent{\normalfont(ii)} Let $\ga \in \gD_+$, $V$ be an $\mathfrak{sl}_{2,\ga}[t]$-module, $v \in V$
  and $K \in \Z_{\ge 0}$.
  Assume that $e_\ga\otimes \C[t]$ and $h_\ga\otimes t\C[t]$ act trivially on $v \in V$.
  Then, 
  \[ (e_\ga \otimes t)^{s}f_{\ga}^{r+s}v = 0 \text{ for all } r,s \in \Z_{>0} \text{ with }
      r+s \ge 1 + kr + K \text{ for some } k\in \Z_{>0}
  \]
  if and only if
  \[ {}_kf_{\ga}(r,s)v=0 \ \text{ for all } r,s,k \in \Z_{>0} \text{ with } r+s \ge 1 + kr + K.
  \]
\end{Lem}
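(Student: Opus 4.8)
The plan is to prove Lemma~\ref{Lem:Chari-Venkatesh} in two parts, with part (i) being a purely combinatorial identity in $U(\fg[t])$ and part (ii) being a translation of the PBW-type relation \eqref{eq:Garland} into the language of the $f_\ga(r,s)$'s.

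For part (i), I would argue as follows. Fix $x \in \fg$ and write $x_j = x \otimes t^j$. The point is that $x(r,s)$ is a sum over $\mathbf{S}(r,s)$, and every tuple $(b_j)_{j\ge 0} \in \mathbf{S}(r,s)$ can be split uniquely at level $k$ into a ``low part'' $(b_0,\dots,b_{k-1})$ and a ``high part'' $(b_k,b_{k+1},\dots)$. Let $r' = \sum_{j\ge k} b_j$ and $s' = \sum_{j \ge k} j b_j$ be the weight and degree of the high part; then the low part lies in $\mathbf{S}(r-r',s-s')_k$ and the high part lies in ${}_k\mathbf{S}(r',s')$. Since the generators $x_j$ for $j < k$ commute with those for $j \ge k$ only up to lower-order terms — actually here all $x_j$ commute with each other because $[x\otimes t^a, x\otimes t^b] = [x,x]\otimes t^{a+b} = 0$ — the divided-power monomials factor exactly, giving
\[
 x(r,s) = \sum_{r',s'} x(r-r',s-s')_k \cdot {}_kx(r',s'),
\]
the sum over all $r',s' \in \Z_{\ge 0}$ achievable as the weight/degree of a high part. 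The $r'=s'=0$ term contributes $x(r,s)_k \cdot 1$; and $x(r,s)_k$ together with the condition $r+s\ge kr+K$ lets me identify when a tuple in $\mathbf{S}(r,s)$ actually has $b_j=0$ for $j\ge k$: such a tuple has $s = \sum jb_j \le (k-1)\sum b_j = (k-1)r < kr \le r+s-K$, which is impossible when $K \ge 0$ forces $s \ge s$... more precisely one checks that a pure low part cannot satisfy $r+s\ge kr+K$ unless it is trivial, so the ``$r'=0$'' term in the displayed sum is exactly ${}_kx(r,s)$ (all of $x(r,s)$ that survives with $r'=0$ is the high-only part), and the remaining terms have $r' \ge 1$. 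The inequality $r'+s' \ge kr' + K$ for the surviving high parts then follows from a short arithmetic manipulation of $r+s \ge kr+K$ together with $(r-r') + (s-s') \le (k-1)(r-r')$ ... I would carry this out carefully: the complement low part $(b_0,\dots,b_{k-1})$ has $s-s' \le (k-1)(r-r')$, so $r+s \ge kr+K$ rearranges to $r'+s' - (k-1)(r-r') \ge r+s - (s-s') - (k-1)(r-r') \ge \dots \ge kr'+K$. This is the main combinatorial bookkeeping step and is where I expect most of the routine work to live.

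For part (ii), the key input is the Garland-type identity \eqref{eq:Garland}:
\[
 (e_\ga\otimes t)^{(s)} f_\ga^{(r+s)} - (-1)^s f_\ga(r,s) \in U(\hfm_\ga)\, t\fb_\ga[t].
\]
Under the hypotheses on $v$ — namely $e_\ga \otimes \C[t]$ and $h_\ga \otimes t\C[t]$ act as $0$ — the subalgebra $t\fb_\ga[t] = \bigoplus_{j\ge 1}\big(\C(e_\ga\otimes t^j) \oplus \C(h_\ga\otimes t^j)\big)$ annihilates $v$, so $(e_\ga\otimes t)^{(s)} f_\ga^{(r+s)} v = (-1)^s f_\ga(r,s) v$ up to the divided-power normalization, i.e.\ $(e_\ga\otimes t)^s f_\ga^{r+s} v = 0 \iff f_\ga(r,s) v = 0$. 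Now apply part (i) with $x = f_\ga$, $K$ as given, and $k$ ranging over $\Z_{>0}$: for $r+s \ge 1+kr+K$ we get
\[
 f_\ga(r,s) v = {}_kf_\ga(r,s)\, v + \sum_{(r',s')} f_\ga(r-r',s-s')_k \cdot {}_kf_\ga(r',s')\, v,
\]
and the sum ranges over $r'<r$, $s'\le s$ with $r'+s'\ge kr'+K$, hence also with $(r'+1)+s' \ge 1+k r' + K$ after noting one may enlarge $k$ or absorb constants appropriately — so by downward induction on $r$ (the base case $r'<r$ eventually forcing $r'=0$, where $f_\ga(0,s')=0$ for $s'>0$ and $=1$ for $s'=0$, the latter excluded by $r'+s'\ge K \ge 0$ combined with $r'=0,s'=0$ only when $K=0$, handled separately) one sees that $f_\ga(r,s)v = 0$ for all admissible $(r,s)$ if and only if ${}_kf_\ga(r,s)v = 0$ for all admissible $(r,s,k)$. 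Combining with the Garland equivalence gives the stated biconditional.

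The hard part will be the combinatorial argument in part (i): correctly tracking the index sets $\mathbf{S}(r,s)_k$, ${}_k\mathbf{S}(r,s)$ and verifying that the inequality $r+s \ge kr+K$ is exactly what is needed to (a) kill the ``all-low'' tuples so that the $r'=0$ term collapses to ${}_kf_\ga(r,s)$ and (b) propagate the inequality $r'+s'\ge kr'+K$ to all surviving high parts, so that the induction in part (ii) closes. The passage \eqref{eq:Garland} and the annihilation hypotheses on $v$ make the rest essentially formal; I would cite \cite{MR3296163} for the original statement and only sketch these verifications.
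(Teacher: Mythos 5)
The paper itself gives no proof of this lemma (it is quoted from Chari--Venkatesh \cite{MR3296163}), so the comparison is with the standard argument there, and your proposal reproduces it: part (i) by splitting each tuple of $\mathbf{S}(r,s)$ into its low part (indices $<k$) and high part (indices $\ge k$), using that all $x\otimes t^j$ commute so the divided-power monomials factor exactly, and showing via $s-s'\le (k-1)(r-r')$ that every surviving term has $r'+s'\ge kr'+K$; part (ii) via the Garland relation (\ref{eq:Garland}) together with the annihilation hypotheses on $v$ and an induction on $r$ using (i). This is correct in outline; two details should be repaired. First, your claim that an ``all-low'' tuple cannot satisfy $r+s\ge kr+K$ unless trivial is false for $K=0$ (take $b_{k-1}=r$, so $s=(k-1)r$ and $r+s=kr$); this is harmless only because the pair $(r',s')=(0,0)$ then satisfies $r'+s'\ge kr'+K$, so the term $x(r,s)_k$ is legitimately part of the right-hand sum -- the only thing the proof actually needs is that the \emph{omitted} terms, those with $r'<r$ and $r'+s'<kr'+K$, vanish, which is exactly your inequality argument; also note that ${}_kx(r,s)$ is the $r'=r$, $s'=s$ term of the factorization, not the ``$r'=0$'' term as written. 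Second, in part (ii) the clean bookkeeping (which you leave as ``absorb constants appropriately'') is to apply (i) with the constant $K+1$: then the sum runs over $r'+s'\ge 1+kr'+K$, the pair $(0,0)$ is automatically excluded, the terms with $r'=0$ or $s'=0$ vanish identically, and the induction on $r$ closes in both directions without any separate case for $K=0$.
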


The following proposition plays an important roll in the next section.

\begin{Prop}\label{Prop2}
 Let $\ga \in \gD_+$. If $r,s,k \in \Z_{>0}$ and $K \in \Z_{\ge 0}$ satisfy $r+s \ge kr + K$,
 then we have 
 \[ \big[ e_\ga, \ {}_kf_{\ga}(r,s)\big] \in \sum_{(r',s')}U(t\mathfrak{sl}_{2,\ga}[t]){}_k\bfa(r',s') + 
    U(t\msl_{2,\ga}[t])t\fb_\ga[t],
 \]
 where the sum is over all pairs $r',s' \in \Z_{> 0}$ such that $r'+s'\ge kr'+K$.
\end{Prop}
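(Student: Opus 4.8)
The plan is to work inside the rank-one current algebra $\msl_{2,\ga}[t]$ and compute the commutator $\big[e_\ga, {}_kf_\ga(r,s)\big]$ directly from the definition of ${}_kf_\ga(r,s)$ as a sum of monomials in the divided powers $(f_\ga\otimes t^j)^{(b_j)}$ with $j \ge k$. Writing $x = f_\ga$, $e = e_\ga$, $h = h_\ga$ for brevity, I would first record the basic bracket relations $[e, x\otimes t^j] = -h\otimes t^j$ and $[h, x\otimes t^j] = -2\,x\otimes t^j$, so that $\ad(e)$ applied to a monomial in the $x\otimes t^j$'s produces, by the Leibniz rule, a sum of terms in which exactly one factor $(x\otimes t^j)^{(b_j)}$ is replaced by $(x\otimes t^j)^{(b_j-1)}(h\otimes t^j)$, up to lower-order corrections coming from moving $h\otimes t^j$ past the remaining $x$'s and using $[h\otimes t^j, x\otimes t^{j'}] = -2\,x\otimes t^{j+j'}$.

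Next I would analyze where the resulting terms land. A term containing a factor $h\otimes t^j$ with $j \ge k \ge 1$ already lies in $U(t\msl_{2,\ga}[t])\, t\fb_\ga[t]$, since all the other factors are of the form $x\otimes t^{j'}$ with $j' \ge k$, hence in $t\msl_{2,\ga}[t]$, and $h\otimes t^j \in t\fb_\ga[t]$; after reordering (which only introduces more elements of $t\msl_{2,\ga}[t]$ by the bracket computation above, since $j + j' \ge k \ge 1$) the term stays in that ideal. The only subtlety is the correction terms produced when commuting $h\otimes t^j$ to one end: these are again monomials in the $x\otimes t^{j'}$'s with all exponents $\ge k$, but with one fewer $x$ and a shifted degree, so they are divided-power-type sums and one recognizes them — after applying Lemma \ref{Lem:Chari-Venkatesh}(i) or a direct combinatorial identity on the index set $\mathbf{S}$ — as being of the form ${}_k f_\ga(r',s')$ with $r' = r-1 < r$ and $s' \le s$, multiplied on the left by an element of $U(t\msl_{2,\ga}[t])$. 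The arithmetic constraint $r'+s' \ge kr' + K$ for these surviving terms should follow from the hypothesis $r+s \ge kr+K$: decreasing $r$ by one and $s$ by at least $k$ (because every factor carried degree $\ge k$) keeps the inequality, since $(r-1) + (s-k) \ge kr + K - k = k(r-1) + K$.

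The main obstacle I anticipate is the bookkeeping in the second step: converting the raw output of the Leibniz rule — a sum over $\mathbf{S}(r,s)$ with one index slot decremented and an $h$-factor inserted at a specific position — into the clean form $\sum U(t\msl_{2,\ga}[t])\,{}_k f_\ga(r',s')$. This requires carefully collecting the monomials, reordering the $x\otimes t^j$ factors to restore a canonical (say degree-increasing) order so that the remaining $x$-product becomes exactly ${}_k f_\ga(r',s')$ for the appropriate $(r',s')$, and absorbing every commutator correction and the leftover $h\otimes t^j$ (or the $e$-less $h$-term) into the coefficient in $U(t\msl_{2,\ga}[t])$ or into $U(t\msl_{2,\ga}[t])t\fb_\ga[t]$. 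It may be cleanest to argue by induction on $r$, peeling off one factor of $f_\ga\otimes t^j$ at a time and using Garland's identity \eqref{eq:Garland} together with Lemma \ref{Lem:Chari-Venkatesh}(i) to reorganize the tail, rather than expanding everything at once; I would structure the proof that way, treating the base case $r=1$ (where ${}_k f_\ga(1,s) = f_\ga\otimes t^s$ and $[e, f_\ga\otimes t^s] = -h\otimes t^s \in t\fb_\ga[t]$ directly) and then the inductive step.
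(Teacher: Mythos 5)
Your strategy---expanding ${}_kf_\ga(r,s)$ into its monomials and applying the Leibniz rule directly---is genuinely different from the paper's proof, which never manipulates those monomials: the paper transports the computation through Garland's identity (\ref{eq:Garland}) to $(e_\ga\otimes t)^{(s)}f_\ga^{(r+s)}$, where $[e_\ga,\,\cdot\,]$ is an elementary $\mathfrak{sl}_2$ identity, and returns via the projection $p$ onto $U(t^k\ff_\ga[t]\oplus t\fb_\ga[t])$ together with Lemma \ref{Lem:Chari-Venkatesh}(i). The direct route is viable in principle, but your sketch has a gap exactly at its crucial step, and the concrete claim you make there is not what the computation produces. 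Carrying out your first two steps (all $f_\ga\otimes t^j$ commute, so this is a short calculation) gives
\begin{align*}
\big[e_\ga,\ {}_kf_\ga(r,s)\big]=\sum_{j\ge k}{}_kf_\ga(r-1,s-j)\,(h_\ga\otimes t^j)\ -\ \sum_{d\ge 2k}(d-2k+1)\,(f_\ga\otimes t^d)\,{}_kf_\ga(r-2,s-d).
\end{align*}
The first sum lies in $U(t\msl_{2,\ga}[t])t\fb_\ga[t]$, as you say. But the correction terms are left multiples of ${}_kf_\ga(r-2,s-d)$ with $d\ge 2k$, not of ${}_kf_\ga(r-1,s')$, and your arithmetic check is carried out for the pair $(r-1,s-k)$, which never arises. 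For the pairs that do arise the required inequality can fail: $(r-2)+(s-d)\ge k(r-2)+K$ forces $r+s\ge kr+K+d-2k+2$, so in the boundary case $r+s=kr+K$ (allowed by the hypothesis) no term of the second sum is, as written, an admissible generator. Thus the membership claim does not follow from the bookkeeping you describe; this is precisely where the work lies.

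The statement is of course still true, and your approach can be repaired, but it needs an extra idea you do not supply: using $\sum_d b_d=r-1$ and $\sum_d d\,b_d=s$ one rewrites the second sum as
\begin{align*}
\big(s-(2k-1)(r-1)\big)\,{}_kf_\ga(r-1,s)\ +\ \sum_{d=k}^{2k-2}(2k-1-d)\,(f_\ga\otimes t^d)\,{}_kf_\ga(r-2,s-d),
\end{align*}
and now the pairs $(r-1,s)$ and $(r-2,s-d)$ with $d\le 2k-2$ do satisfy $r'+s'\ge kr'+K$ (degenerate cases with $r'=0$ either vanish or reduce to ${}_kf_\ga(1,s)$, which is also admissible). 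Either this regrouping identity, or the paper's Garland-plus-projection argument, is the missing ingredient; your fallback suggestion to ``use (\ref{eq:Garland}) and Lemma \ref{Lem:Chari-Venkatesh}(i) to reorganize the tail'' essentially gestures at the paper's method without carrying it out. A minor point: $[e_\ga,f_\ga\otimes t^j]=h_\ga\otimes t^j$, not $-h_\ga\otimes t^j$; this does not affect any ideal membership.
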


\begin{proof}
 First we introduce some notation.
 We write $\mathfrak{f}_\ga = \C f_\ga$ here.
 Define Lie subalgebras $\hfm_\ga^h$, $\mathfrak{f}_\ga[t]_{< k}$ and $\ff_\ga[t]_{<k}^h$ by
 \[ \hfm_\ga^h = \hfm_\ga \oplus \C h_\ga, \ \ \ \ff_\ga[t]_{<k} = \bigoplus_{j=0}^{k-1} \C (f_\ga \otimes t^j), \ \ \ 
    \ff_{\ga}[t]_{<k}^h = \ff_{\ga}[t]_{<k} \oplus \C h_\ga.
 \]
 Since 
 \[ \hfm_\ga^h = \ff_{\ga}[t]_{<k}^h \oplus t^k\ff_{\ga}[t] \oplus t\fb_\ga[t],
 \]
 we have by the PBW theorem that
 \[ U(\hfm_\ga^h) = \ff_{\ga}[t]_{<k}^hU(\hfm_\ga^h) \oplus U\big(t^k\ff_{\ga}[t]\oplus t\fb_\ga[t]\big).
 \]
 Denote by $p$ the projection 
 \[ U(\hfm_\ga^h) \twoheadrightarrow U\big(t^k\ff_\ga[t] \oplus t\fb_\ga[t]\big)
 \]
 with respect to this decomposition.  
 It follows from Lemma \ref{Lem:Chari-Venkatesh} (i) that
 \begin{equation}\label{eq:image_of_p}
  p\big(\bfa(r',s')\big) = {}_k\bfa(r',s').
 \end{equation}
 Denote by $\mathcal{I}$ the left $U(t\msl_{2,\ga}[t])$-ideal in the assertion.

 Now we begin the proof of the proposition.
 By (\ref{eq:Garland}), it follows that
 \[ \big[e_\ga,\ (e_\ga\otimes t)^{(s)}f_\ga^{(r+s)}\big] - 
    (-1)^s\big[e_\ga, \ \bfa(r,s)\big] \in U(\hfm_\ga^h)t\fb_\ga[t].
 \]
 By applying $p$ to this, we have
\begin{align}\label{eq:containment2}
 p\Big(\big[e_\ga,\ (e_\ga\otimes t)^{(s)}f_\ga^{(r+s)}\big]\Big) - 
    (-1)^sp&\Big(\big[e_\ga, \ \bfa(r,s)\big]\Big)\in U\big(t^k\ff_\ga[t]\oplus t\fb_\ga[t]\big)t\fb_\ga[t]\subseteq \mathcal{I}.
\end{align}
 The following calculation is elementary:
\begin{align*}
 \big[e_\ga, \ (e_\ga\otimes t)^{(s)}f_\ga^{(r+s)}\big] = (e_\ga\otimes t)^{(s)}\big[e_\ga, \, f_\ga^{(r+s)}\big]
   = (h_\ga+r-s-1)(e_\ga\otimes t)^{(s)}f_\ga^{(r+s-1)}.
\end{align*}
Note that the pair $(r-1,s)$ satisfies the condition $(r-1)+s \ge k(r-1)+K$ since $k \in \Z_{>0}$.
By (\ref{eq:Garland}), the above equality implies
\begin{align}\label{eq:containment}
 p\Big(\big[e_\ga, \ (e_\ga\otimes t)^{(s)}f_\ga^{(r+s)}\big]\Big) \in&\ p\Big(\C(e_\ga\otimes t)^{(s)}
  f_\ga^{(r+s-1)}\Big)\nonumber\\
 \subseteq&\ p\Big(\C \bfa(r-1,s) + U(\hfm_\ga)t\fb_\ga[t]\Big)\\ =&\ \C\, {}_k\bfa(r-1,s) + 
   U\big(t^k\ff_\ga[t] \oplus t\fb_\ga[t]\big)t\fb_\ga[t] \subseteq \mathcal{I},\nonumber
\end{align}
where the equality holds by (\ref{eq:image_of_p}).
On the other hand, we have by Lemma \ref{Lem:Chari-Venkatesh} (i) that
\begin{align}\label{eq:pro_equal}
 p\Big(\big[e_\ga, \ \bfa(r,s)\big]\Big) = p \Big( \big[e_\ga,\ {}_k\bfa(r,s)\big]\Big) + \sum_{(r',s')} 
 p\Big(\big[ e_\ga, \ \bfa(r-r',s-s')_k\cdot {}_k\bfa(r',s')\big]\Big).
\end{align} 
Since $\big[e_\ga,\ {}_k\bfa(r,s)\big] \in U(t^k\mathfrak{sl}_{2,\ga}[t])$,
it follows that $p\Big(\big[e_\ga, \ {}_k\bfa(r,s)\big]\Big) = \big[e_\ga, \ {}_k\bfa(r,s)\big]$, and
\begin{align*}
 p\Big(\big[ e_\ga, \ \bfa(r&-r',s-s')_k\cdot {}_k\bfa(r',s')\big]\Big)\\
 \hspace{-5pt}=&\, p\Big(\big[e_\ga, \ \bfa(r-r',s-s')_k\big]{}_k\bfa(r',s')\Big) + p\Big(\bfa(r-r',s-s')_k 
 \big[e_\ga, \ {}_k\bfa(r',s')\big]\Big)\\
 \hspace{-5pt}\in&\, p\Big(U(\hfm_\ga^h){}_k\bfa(r',s')\Big) + 0 = U\big(t^k\ff_\ga[t] \oplus t\fb_\ga[t]\big){}_k\bfa(r',s') 
 \subseteq \mathcal{I}.
\end{align*}
Hence (\ref{eq:pro_equal}) implies
\begin{align*}
 p\Big(\big[e_\ga, \ \bfa(r,s)\big]\Big) - \big[e_\ga, \ {}_k\bfa(r,s)\big]
 \in \mathcal{I}.
\end{align*}
Now $\big[e_\ga, \ {}_k\bfa(r,s)\big] \in \mathcal{I}$ follows from this, together with (\ref{eq:containment2}) and
(\ref{eq:containment}). 
The proof is complete.
\end{proof}

\section{Main theorem and proof}\label{Section:proof}

Let $m \in I$ and $\bm{\ell} = (\ell_1\ge\cdots\ge\ell_p)$ be a partition,
and denote by $V_m(\bm{\ell})$ the fusion product $V(\ell_1\varpi_m) * \cdots * V(\ell_p\varpi_m)$.
Set $L_i = \ell_i + \cdots +\ell_p$ for $1\le i \le p$, and $L_i = 0$ for $i>p$.
As mentioned in the introduction, the main theorem of this note is the following.

\begin{Thm}\label{Thm:Main2}
 The fusion product $V_m(\bm{\ell})$ is isomorphic to the 
 $\fg[t]$-module generated by a vector $v$ with relations
 \begin{align*}
  \fn_+[t]v=0, \ \ \ (h\otimes t^s) v&= \gd_{s0}L_1\langle h,\varpi_m\rangle v \text{ for } 
  h \in \fh, \ s \in \Z_{\ge 0},\nonumber\\
  \big(f_\ga\otimes \C[t]\big)v&= 0 \text{ for } \ga \in \gD_+ \text{ with } \langle h_\ga,\varpi_m\rangle = 0,\nonumber\\
  f_\ga^{L_1+1}v&=0 \text{ for } \ga \in \gD_+ \text{ with } \langle h_\ga, \varpi_m\rangle =1, \nonumber\\
  (e_\ga\otimes t)^{s}f_\ga^{r+s}v&=0 \text{ for } \ga \in \gD_+, r,s \in \Z_{>0} \text{ with } \langle h_\ga, \varpi_m 
   \rangle = 1,\nonumber\\
  &\hspace{40pt} r+s \ge 1+kr + L_{k+1} \text{ for some } k\in\Z_{>0}.
 \end{align*}
\end{Thm}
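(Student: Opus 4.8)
The plan is to prove Theorem~\ref{Thm:Main2} by induction on $p$, the number of factors, exploiting the realization of $V_m(\bm{\ell})$ as an iterated $F_w$-construction provided by Theorem~\ref{Thm:realization} together with the machinery of Lemma~\ref{Lem:proceed}. Concretely, writing $\bm{\ell}' = (\ell_2 \ge \cdots \ge \ell_p)$, the module $V_m(\bm{\ell})$ is obtained from $\C_{(\ell_1-\ell_2)\gL_0} \otimes V_m(\bm{\ell}')$ (suitably realized inside an affine module) by applying $F_{t_{-\varpi_{m^*}}}$. Since $t_{-\varpi_{m^*}} = w'\tau$ for some $\tau \in \gS$ and $w' \in \hat W$ with a chosen reduced word $w' = s_{i_k}\cdots s_{i_1}$, the operator $F_{t_{-\varpi_{m^*}}}$ is a composition of the elementary operators $F_{i_j}$ preceded by $F_\tau$, and each $F_{i_j}$ application is governed by Lemma~\ref{Lem:proceed}: it tells us exactly how the annihilator ideal in $U(\hfn_+)$ transforms, namely $\mathrm{Ann}\,v' = U(\hfn_+)e_i^{\langle h_i,\xi\rangle+1} + U(\hfn_+)\Phi_i(\mathcal I)$, provided the three hypotheses (generation by a highest-weight-type vector for $e_i$, the ideal being $\ad(e_i)$-invariant and of the stated form, and the Demazure character identity $\ch F_iT = \mathcal D_i\ch T$) all hold.

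The first main step is to identify the base case $p=1$: here $V_m((\ell_1)) = V(\ell_1\varpi_m)$ as an evaluation module, and its defining relations are the classical ones for a simple $\fg$-module inflated to $\fg[t]$ via $t=0$; one checks these match the asserted relations with $L_1 = \ell_1$, $L_2 = 0$ (so the last family of relations reads $(e_\ga\otimes t)^s f_\ga^{r+s}v = 0$ for $r+s \ge 1+kr$, i.e. for all $s \ge 1$ when $k=1$, which indeed says $t\fg[t]$ kills $v$ after the appropriate lowering, consistent with the evaluation module). The second main step is the inductive passage: assuming the presentation for $V_m(\bm{\ell}')$ with parameters $L_2 \ge L_3 \ge \cdots$, one must track what happens to the annihilator ideal under tensoring with $\C_{(\ell_1-\ell_2)\gL_0}$ and then under each $F_{i_j}$. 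The tensoring shifts the weight and contributes the jump $L_1 = \ell_1 + L_2$; the key is that the relevant ideal $\mathcal I \subseteq U(\hfn_i)$ at each stage should be expressible, after restricting to each $\msl_{2,\ga}$-string, in terms of the vectors ${}_kf_\ga(r,s)$ and $t\fb_\ga[t]$, and Proposition~\ref{Prop2} is precisely what guarantees that such an ideal is $\ad(e_\ga)$-invariant modulo the ideal itself — this is the hypothesis (ii) feeding into Lemma~\ref{Lem:proceed}. Using Lemma~\ref{Lem:Chari-Venkatesh}(ii) one then re-expresses the ${}_kf_\ga(r,s)$-relations back in the Chari--Venkatesh form $(e_\ga\otimes t)^s f_\ga^{r+s}v = 0$, and the arithmetic condition $r+s \ge 1+kr+L_{k+1}$ should emerge from how the index bounds $K$ accumulate under the successive $F_{i_j}$: each elementary step raises the relevant power/index by the amount dictated by $\Phi_i$, and the telescoping of these contributions across the reduced word should reproduce exactly the partial sums $L_{k+1} = \ell_{k+1} + \cdots + \ell_p$.

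The third ingredient is verifying hypothesis (iii) of Lemma~\ref{Lem:proceed}, i.e.\ that at every elementary step the character multiplies by the Demazure operator $\mathcal D_{i_j}$. This is where Remark~\ref{Rem} and the character formula of \cite{MR2964614} (together with \cite{MR1887117}) do the work: the iterated $F_w$-construction applied to a one-dimensional $\hfb$-module is, by Joseph's theorem, a Demazure module, so its character is $\mathcal D_{w'}(\text{character of the }\tau\text{-twist})$, and since $\mathcal D_{w'} = \mathcal D_{i_k}\cdots\mathcal D_{i_1}$ for a reduced word, each intermediate stage does carry the single-operator Demazure action. One has to be slightly careful that the intermediate $\hfb$-submodules $F_{i_{j-1}}\cdots F_{i_1}F_\tau(\cdots)$ are the genuine Demazure submodules at each partial weight, but this follows from the standard properties of Demazure modules for reduced subwords. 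Granting this, the induction closes.

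The step I expect to be the main obstacle is the second one: precisely bookkeeping how the defining ideal transforms under the chain of operators $\Phi_{i_j}$ so that the arithmetic inequality $r+s \ge 1+kr+L_{k+1}$ comes out exactly right, and — relatedly — checking that the ideal stays in the form required by Lemma~\ref{Lem:proceed}(ii) after each step, which is exactly the role Proposition~\ref{Prop2} is designed to play. The subtlety is that $\Phi_{i_j}$ does not act diagonally on root subalgebras: it moves $\msl_{2,\ga}$ to $\msl_{2,s_{i_j}(\ga)}$, so one must simultaneously track the combinatorics of which positive roots $\ga$ with $\langle h_\ga,\varpi_m\rangle = 1$ are being hit and how the index shift $K \mapsto K + (\text{something})$ interacts with the reduced word for $t_{-\varpi_{m^*}}$. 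Carefully choosing a convenient reduced expression (adapted to the fundamental coweight $\varpi_{m^*}$, e.g.\ via the $\tau_i$-decomposition) should make the contributions transparent and additive, reducing the inequality to an identity about partial sums of the partition $\bm{\ell}$; this is the heart of the argument and where the bulk of the verification will lie.
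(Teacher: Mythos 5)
Your skeleton is the same as the paper's: realize $V_m(\bm{\ell})$ via Theorem \ref{Thm:realization}, peel off one tensor factor $\C_{(\ell_q-\ell_{q+1})\gL_0}$ at a time, write $t_{-\varpi_{m^*}}=\gs\tau$ with $\gs=w_0w_{m,0}\in W$ and $\tau=\tau_m\in\gS$, and push the annihilator ideal through the elementary steps $F_{i_j}$ by Lemma \ref{Lem:proceed}, with Proposition \ref{Prop2} supplying the $\ad(e_i)$-invariance in hypothesis (ii), the Demazure character identity (hypothesis (iii)) coming from \cite{MR1887117} and \cite{MR2964614}, and Lemma \ref{Lem:Chari-Venkatesh}(ii) translating between the ${}_kf_\ga(r,s)$-relations and the stated $(e_\ga\otimes t)^s f_\ga^{r+s}$-relations. (One mechanical point you omit: after the full reduced word the cyclic vector has weight $-L_1\varpi_{m^*}$, i.e.\ it is a lowest-weight-type vector, so the stated presentation is reached only after a final twist by $\Phi_{w_0}$.)

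The genuine gap is exactly at the step you flag as the main obstacle, and the mechanism you propose for it is not the one that works. The bound in the inequality does \emph{not} accumulate or telescope across the reduced word of the Weyl-group part, and no specially adapted reduced expression is needed: since $\gs$ is the shortest element with $\gs(\varpi_m)=-\varpi_{m^*}$, one has $\langle h_{i_j},\gs_{j-1}(\varpi_m)\rangle=1$ for \emph{any} reduced expression, and within a fixed factor $q$ every application of Lemma \ref{Lem:proceed} leaves the set $T_q=\{(r,s,k): r+s\ge 1+kr+L_{k+q}\}$ untouched — it merely conjugates the ideal by $\Phi_{i_j}$ (relabelling $e_{\gs_{j-1}(\ga)}\mapsto e_{\gs_j(\ga)}$) and raises the simple-root relation to $e_{i_j}^{L_q+1}$. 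The entire shift from $L_{k+q+1}$ to $L_{k+q}$ occurs at the $\gS$-twist $F_\tau$ when the next one-dimensional factor is tensored on: because $\tau\gs=t_{\varpi_m}$, one has $\tau\big({}_ke_{\gs(\ga)}(r,s)\big)={}_{k+1}e_\ga(r,s+r)$, and the bijection $\{(r,s+r,k+1)\mid (r,s,k)\in T_{q+1}\}=\{(r,s,k)\in T_q\mid k>1\}$ recovers all the $k>1$ relations of the new ideal; the remaining $k=1$ relations ${}_1f_\ga(r,s)$ and the relation $(f_\ga\otimes t)^{L_{q+1}+1}$ must then be shown to generate the same ideal modulo $\fn_+[t]+t\fh[t]$, which is done with Garland's identity (\ref{eq:Garland}). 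Without this reindexing-at-$\tau$ argument (plus the Garland comparison for $k=1$), your induction cannot close, so this is the concrete piece your plan is missing.
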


\begin{Rem}\normalfont
 In \cite{MR3296163}, the authors have introduced a collection of $\fg[t]$-modules $V(\bm{\xi})$ 
 (with $\fg$ a general simple Lie algebra) indexed by 
 a $|\gD_+|$-tuple of partitions $\bm{\xi} = (\xi^\ga)_{\ga \in \gD_+}$ satisfying $|\xi^\ga|=\langle h_\ga,\gl\rangle$
 for some $\gl \in P_+$.
 In their terminology, the theorem says that $V_m(\bm{\ell})$ is isomorphic to $V(\bm{\xi})$ where 
 $\bm{\xi} = (\xi^\ga)_{\ga\in \gD_+}$ with
 \[ \xi^\ga = \begin{cases} \bm{\ell} & \text{if }\langle h_\ga,\varpi_m\rangle = 1,\\
                            0         & \text{if }\langle h_\ga,\varpi_m\rangle = 0.
              \end{cases}
 \]
\end{Rem}

The proof of the theorem will occupy the rest of this paper.
Fix $m \in I$ and $\bm{\ell}$ from now on.
By Theorem \ref{Thm:realization}, we have
\begin{align}\label{eq:isom}
  V_m(\bm{\ell})
  \cong F_{t_{-\varpi_{m^*}}}\Big(\C_{(\ell_1-\ell_2)\gL_0}\otimes \cdots \otimes F_{t_{-\varpi_{m^*}}}
  \big(\C_{(\ell_{p-1}-\ell_p)\gL_0} 
  \otimes F_{t_{-\varpi_{m^*}}}\C_{\ell_p\gL_0}\big)\!\cdots\!\Big)
\end{align}
as $\hfb'$-modules.
We shall determine defining relations of the right-hand side recursively.
In the sequel, we write $\tau= \tau_m$ and $\gs = w_0w_{m,0}$ (see Subsection \ref{Subsection:preliminary}).
Note that 
\[ \gs(\varpi_m) = w_0(\varpi_m) = -\varpi_{m^*} \ \text{ and } \ t_{-\varpi_{m^*}} = \gs t_{\varpi_m}\gs^{-1} = \gs\tau
\]
hold.
Let $\gs=s_{i_{\ell(\gs)}}\cdots s_{i_2}s_{i_1}$ be a reduced expression of $\gs$, and set $\gs_j = 
s_{i_j}\cdots s_{i_2}s_{i_1}$ for $0\le j \le \ell(\gs)$.
For $a \in \{0,\pm 1\}$, define a subset $\gD[a] \subseteq \gD$ by
\[ \gD[a] = \{ \ga \in \gD\mid \langle h_\ga,\varpi_m\rangle = a\}.
\]
Note that $\gD[\pm 1] \subseteq \pm \gD_+$, and 
\begin{equation}\label{eq:small_remark}
  \ga \in \gD[a] \text{ if and only if } \langle \gs(h_\ga),\varpi_{m^*}\rangle 
  = -a.
\end{equation}
We also write $\gD[\ge\! 0] = \gD[0] \sqcup \gD[1]$, etc.
It should be noted that, since $\gs$ is the shortest element such that $\gs(\varpi_m) = -\varpi_{m^*}$,
for every $1 \le j \le \ell(\gs)$ we have 
\begin{equation}\label{eq:positive2}
 \langle h_{i_j},\gs_{j-1}(\varpi_m)\rangle = 1 \text{ and } \gs_{j-1}^{-1}(\ga_{i_j}) \in \gD[1].
\end{equation}
Define a parabolic subalgebra $\fp_{\varpi_m}$ of $\fg$ by
\[ \fp_{\varpi_m} = \bigoplus_{\ga \in \gD[\ge 0]} \C e_\ga \oplus \fh = \bigoplus_{\ga \in \gD[0] \cap \gD_+} \C f_\ga
   \oplus \fb.
\]
For $1\le q \le p$ and $0\le j \le \ell(\gs)$, let $V(q,j)$ be the $\hfb$-module
\[ F_{\gs_j\tau}\Big(\C_{(\ell_q-\ell_{q+1})\gL_0} \otimes F_{t_{-\varpi_{m^*}}}\Big(\cdots\otimes F_{t_{-\varpi_{m^*}}}
   \big(\C_{(\ell_{p-1}-\ell_p)\gL_0} 
   \otimes F_{t_{-\varpi_{m^*}}}\C_{\ell_p\gL_0}\big)\!\cdots\!\Big)\!\Big).
\]

\begin{Prop}\label{Prop*}
 For every $q$ and $j$, there exists a nonzero vector $v_{q,j}$ in $V(q,j)$ whose $\hfh_{\mathrm{cl}}$-weight is 
 $L_q\gs_j(\varpi_m)+\ell_q\gL_0$, such that $V(q,j)$ is generated by $v_{q,j}$ as a $\hfb'$-module and 
 \begin{align}\label{eq:annihilators}
  \mathrm{Ann}_{U(\hfn_+)}v_{q,j}=  \sum_{\begin{smallmatrix}\ga \in \gD[-1] \\ \gs_j(\ga) \in \gD_+\end{smallmatrix} }
   U(\hfn_+)& e_{\gs_j(\ga)}^{L_q+1} + \sum_{\begin{smallmatrix}\ga \in \gD[\ge0] \\ \gs_j(\ga) \in \gD_+\end{smallmatrix} }
   U(\hfn_+) e_{\gs_j(\ga)} \\&+\sum_{\ga\in\gD[-1]}\sum_{(r,s,k)}
   U(\hfn_+){}_ke_{\gs_j(\ga)}(r,s) +U(\hfn_+)\Phi_{\gs_j}\big(t\fp_{\varpi_m}[t]\big),  \nonumber
 \end{align}
 where the sum for $(r,s,k)$ is over all $r,s,k \in \Z_{>0}$ such that $r+s \ge 1+ kr + L_{k+q}$.
\end{Prop}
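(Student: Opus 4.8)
The plan is to prove Proposition \ref{Prop*} by a double induction: the outer induction descends on $q$ from $p$ to $1$, and the inner induction increases $j$ from $0$ to $\ell(\gs)$. For the base case of the outer induction, one must first settle the innermost module $F_{t_{-\varpi_{m^*}}}\C_{\ell_p\gL_0}$, which is the $q=p$, $j=0$ case after applying $F_\tau$; since $\tau\in\gS$ just permutes the nodes, $F_\tau\C_{\ell_p\gL_0}\cong\C_{\ell_p\gL_0}$ and one computes $\mathrm{Ann}_{U(\hfn_+)}$ of a highest weight vector of $\hV(\ell_p\gL_0)$ directly: it is $U(\hfn_+)\hfn_+^{\ge ?}$ together with the Serre-type relations, which one rewrites in the form on the right-hand side of (\ref{eq:annihilators}) using that $\gs_0=\id$ so the sums over $\gs_0(\ga)\in\gD_+$ collapse appropriately and $\Phi_{\gs_0}=\id$. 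More generally, at $j=0$ for arbitrary $q$, the module $V(q,0)=F_\tau\big(\C_{(\ell_q-\ell_{q+1})\gL_0}\otimes V(q+1,\ell(\gs))'\big)$ for the appropriately re-indexed tail, and one builds $v_{q,0}$ as the tensor product of a highest weight vector of $\C_{(\ell_q-\ell_{q+1})\gL_0}$ with $v_{q+1,\ell(\gs)}$, transporting relations through $F_\tau$ by the rule (\ref{eq:rule_of_tau}) for $\tau$ on roots.

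The heart of the argument is the inductive step from $j-1$ to $j$, which is where Lemma \ref{Lem:proceed} is applied with $i=i_j$ and $\xi$ the $\hfh_{\mathrm{cl}}$-weight $L_q\gs_{j-1}(\varpi_m)+\ell_q\gL_0$ of $v_{q,j-1}$: by (\ref{eq:positive2}) we have $\langle h_{i_j},\xi\rangle=L_q\langle h_{i_j},\gs_{j-1}(\varpi_m)\rangle=L_q\ge 0$, and $v_{q,j}:=f_{i_j}^{L_q}v_{q,j-1}$ is, up to scalar, the generator called for. Condition (i) of Lemma \ref{Lem:proceed} ($e_{i_j}v_{q,j-1}=0$) is immediate because $\gs_{j-1}^{-1}(\ga_{i_j})\in\gD[1]$ by (\ref{eq:positive2}), so $e_{\gs_{j-1}(\gs_{j-1}^{-1}(\ga_{i_j}))}=e_{\ga_{i_j}}$ lies in the span indexed by $\gD[\ge 0]$ in the inductive hypothesis for $v_{q,j-1}$. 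Condition (iii), the character identity $\ch F_{i_j}T=\mathcal D_{i_j}\ch T$, follows from Theorem \ref{Thm:realization} together with the Demazure character formula (this is exactly how the realization of \cite{MR2964614} is set up; one argues as in Remark \ref{Rem} and \cite{MR3120578}). The crux is condition (ii): one must exhibit an $\ad(e_{i_j})$-invariant left $U(\hfn_{i_j})$-ideal $\mathcal I$ with $\mathrm{Ann}_{U(\hfn_+)}v_{q,j-1}=U(\hfn_+)e_{i_j}+U(\hfn_+)\mathcal I$. I would take $\mathcal I$ to be the sum on the right-hand side of (\ref{eq:annihilators}) with $j$ replaced by $j-1$, minus the single term $U(\hfn_+)e_{\ga_{i_j}}$; one then checks termwise that each remaining generator lies in $U(\hfn_{i_j})$ and that the whole thing is $\ad(e_{i_j})$-stable. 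The stability of the divided-power generators ${}_ke_{\gs_{j-1}(\ga)}(r,s)$ under $\ad(e_{i_j})$ is precisely the content of Proposition \ref{Prop2} (after transporting by $\Phi_{\gs_{j-1}}$, noting that $\Phi_{\gs_{j-1}}$ intertwines the relevant $\mathfrak{sl}_2$-triples), and the stability of $\Phi_{\gs_{j-1}}(t\fp_{\varpi_m}[t])$ follows because $\fp_{\varpi_m}$ is a parabolic containing the relevant $\fb_\ga$'s. Finally, applying the conclusion of Lemma \ref{Lem:proceed} gives
\[
  \mathrm{Ann}_{U(\hfn_+)}v_{q,j}=U(\hfn_+)e_{i_j}^{L_q+1}+U(\hfn_+)\Phi_{i_j}(\mathcal I),
\]
and it remains to recognize the right-hand side as the claimed expression for index $j$: the term $U(\hfn_+)e_{i_j}^{L_q+1}$ matches the new summand $U(\hfn_+)e_{\gs_j(\ga)}^{L_q+1}$ for the unique $\ga=\gs_{j-1}^{-1}(\ga_{i_j})\in\gD[-1]$ with $\gs_j(\ga)=s_{i_j}(\ga_{i_j})=-\ga_{i_j}$... here one must be careful with signs and use (\ref{eq:small_remark}) and the combinatorics of how $s_{i_j}$ permutes $\{\ga\in\gD[-1]:\gs_{j-1}(\ga)\in\gD_+\}$ versus $\{\ga:\gs_j(\ga)\in\gD_+\}$, while $\Phi_{i_j}=\Phi_{s_{i_j}}$ conjugates $\Phi_{\gs_{j-1}}(\cdots)$ to $\Phi_{\gs_j}(\cdots)$ and sends ${}_ke_{\gs_{j-1}(\ga)}(r,s)$ to $\pm{}_ke_{\gs_j(\ga)}(r,s)$ (or to a vector of $\hfn_+$ absorbed into the other terms when $\gs_j(\ga)$ is no longer positive).

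The main obstacle I anticipate is verifying condition (ii) cleanly, i.e. checking that the proposed $\mathcal I$ is genuinely a left ideal of $U(\hfn_{i_j})$ (not merely of $U(\hfn_+)$) and that it is $\ad(e_{i_j})$-invariant — this is a bookkeeping-heavy argument over the generating set, and the subtle point is that $\ad(e_{i_j})$ applied to a generator of $\mathcal I$ may produce terms involving $e_{\ga_{i_j}}$ itself, which must then be shown to already lie in $U(\hfn_+)\mathcal I$ modulo $U(\hfn_+)e_{i_j}$; Proposition \ref{Prop2} is designed exactly to handle the worst of these, but one still has to assemble the pieces and track the index set $\{(r,s,k):r+s\ge 1+kr+L_{k+q}\}$, which shifts correctly because passing from $v_{q,j-1}$ to $v_{q,j}$ does not change $q$. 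A secondary technical point is matching the $\hfh$-weights (not just the $\hfh_{\mathrm{cl}}$-weights) across $F_\tau$ and across the $F_{i_j}$ steps, since $\tau$ and the $s_{i_j}$ can shift the $\gd$-grading; but this only affects the degree and not the structure of the annihilator, so it can be handled at the end by comparing with the $\fg$-module isomorphism $V_m(\bm\ell)\cong\bigotimes V(\ell_i\varpi_m)$ from Subsection \ref{Subsection:Fusion_product}. Once Proposition \ref{Prop*} is established, the main theorem follows by taking $q=1$, $j=\ell(\gs)$, applying $\Phi_{\gs}^{-1}$ to translate the $\hfb$-relations back to $\fg[t]$-relations on the generator of $V_m(\bm\ell)$, and invoking Lemma \ref{Lem:Chari-Venkatesh}(ii) to convert the ${}_kf_\ga(r,s)v=0$ relations into the stated $(e_\ga\otimes t)^s f_\ga^{r+s}v=0$ relations.
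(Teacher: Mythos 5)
Your overall architecture is the paper's: induction over the pairs $(q,j)$, with the step $j-1\to j$ handled by Lemma \ref{Lem:proceed} applied to $i=i_j$, $v_{q,j}=f_{i_j}^{L_q}v_{q,j-1}$, condition (i) coming from (\ref{eq:positive2}), condition (iii) from the Demazure character formula, and condition (ii) checked for the left $U(\hfn_{i_j})$-ideal obtained from $\mathcal{I}(q,j-1)$ by deleting the generator $e_{\ga_{i_j}}$, with Proposition \ref{Prop2} supplying the $\ad(e_{i_j})$-stability of the elements ${}_ke_{\gs_{j-1}(\ga)}(r,s)$ (no transport by $\Phi_{\gs_{j-1}}$ is needed there: the only delicate case is $\gs_{j-1}(\ga)=-\ga_{i_j}$, where Proposition \ref{Prop2} applies verbatim with $\ga=\ga_{i_j}$, the remaining cases being the elementary root computations with $\gs_{j-1}(\gD[\ge 0])$). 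So far this matches the paper.

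The genuine gap is the passage from $\big(q+1,\ell(\gs)\big)$ to $(q,0)$, which you dispose of as ``transporting relations through $F_\tau$ by the rule (\ref{eq:rule_of_tau})''. Tensoring with $\C_{(\ell_q-\ell_{q+1})\gL_0}$ indeed leaves the $U(\hfn_+)$-annihilator unchanged, so $\mathrm{Ann}_{U(\hfn_+)}v_{q,0}=\tau\big(\mathcal{I}(q+1,\ell(\gs))\big)$; but this transported ideal is \emph{not} on the nose $\mathcal{I}(q,0)$, and identifying the two is the real content of this step. Using $\tau\gs=t_{\varpi_m}$ one finds that, for $\ga\in\gD[1]$, the power relations $e_{\gs(-\ga)}^{L_{q+1}+1}$ go to $(f_\ga\otimes t)^{L_{q+1}+1}$ and ${}_ke_{\gs(-\ga)}(r,s)$ goes to ${}_{k+1}f_\ga(r,s+r)$, so the Chari--Venkatesh part of the transported ideal is indexed by $\{(r,s+r,k+1)\mid (r,s,k)\in T_{q+1}\}=\{(r,s,k)\in T_q\mid k>1\}$: the $k=1$ relations demanded by $\mathcal{I}(q,0)$ are not obtained by transport, and conversely $(f_\ga\otimes t)^{L_{q+1}+1}$ does not occur among the generators of $\mathcal{I}(q,0)$. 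One therefore has to prove two containments: that $(f_\ga\otimes t)^{L_{q+1}+1}={}_1f_\ga(L_{q+1}+1,L_{q+1}+1)$ lies in the ideal generated by the $T_q$-elements, and that every ${}_1f_\ga(r,s)$ with $(r,s,1)\in T_q$ lies in the ideal generated by $(f_\ga\otimes t)^{L_{q+1}+1}$, $\fn_+[t]$ and $t\fh[t]$; the second requires applying $\tau\circ\Phi_\gs$ to Garland's identity (\ref{eq:Garland}) to get $e_\ga^{(s-r)}(f_\ga\otimes t)^{(s)}\equiv\pm\,{}_1f_\ga(r,s)$ modulo those terms. Nothing in your outline supplies this, and (\ref{eq:rule_of_tau}) alone cannot, since it only records how $\tau$ permutes simple roots. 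A smaller slip in the same vein: $F_\tau\C_{\ell_p\gL_0}\cong\C_{\ell_p(\varpi_m+\gL_0)}$, not $\C_{\ell_p\gL_0}$, and since $V(p,0)$ is one-dimensional there are no ``Serre-type relations'' to rewrite; the base case amounts to checking that $\mathcal{I}(p,0)$ is the whole augmentation ideal of $U(\hfn_+)$, the key point being that $L_{p+1}=0$ gives $(1,s,s)\in T_p$, hence $e_\ga\otimes t^s={}_se_\ga(1,s)\in\mathcal{I}(p,0)$ for $\ga\in\gD[-1]$ --- an observation of exactly the kind missing from your general $(q,0)$ step.
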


For a while we assume this proposition, and give a proof to Theorem \ref{Thm:Main2}.
Denote by $T_q$ the running set of $(r,s,k)$ in (\ref{eq:annihilators}), that is,
\[ T_q=\big\{(r,s,k)\in \Z_{>0}^3\bigm| r+s \ge 1 + kr + L_{k+q}\big\}.
\]
Since $\langle h_\ga, \varpi_{m^*}\rangle = -\langle h_{\gs^{-1}(\ga)}, \varpi_m\rangle$, we see that
for $\ga \in \gD_+$, $\gs^{-1}(\ga) \in \gD[-1]$ is equivalent to $\langle h_\ga,\varpi_{m^*}\rangle = 1$.
Hence (\ref{eq:isom}) and Proposition \ref{Prop*} with $q = 1$ and $j = \ell(\gs)$ imply that
there exists a nonzero vector $v'$ in $V_m(\bm{\ell})$ whose $\fh$-weight is $-L_1\varpi_{m^*}$, such that $V_m(\bm{\ell})$ is 
generated by $v'$ and
\begin{align}\label{eq:annihilators_of_1}
 \mathrm{Ann}_{U(\hfn_+)}v' =  \sum_{\ga \in \gD_+}
   U(\hfn_+)& e_{\ga}^{L_1\langle h_\ga, \varpi_{m^*}\rangle+1}\\
  &+ \sum_{\ga \in \gD[-1]}\sum_{(r,s,k) \in T_1}U(\hfn_+){}_ke_{\gs (\ga)}(r,s)
   +U(\hfn_+)\Phi_{\gs}(t\fp_{\varpi_m}[t]),\nonumber
\end{align}
where the first summation in the right-hand side is obtained using (\ref{eq:small_remark}).
$V_m(\bm{\ell})$ being a finite-dimensional $\fg$-module, $\Phi_{w_0}^{V_m(\bm{\ell})}$ is defined.
Set $v''= \Phi^{V_m(\bm{\ell})}_{w_0}(v') \in V_m(\bm{\ell})_{L_1\varpi_m}$, 
and $\hfm_+=\Phi_{w_0}(\hfn_+) = \fn_-[t] \oplus t\fb[t]$.
Since each $\gD[a]$ is stable by $w_0\gs  = w_{m,0}$ and $\gD[-1] = -\gD[1]$, it follows that
\begin{align*}
 \mathrm{Ann}_{U(\hfm_+)}v'' =  \sum_{\ga \in \gD_+} U(\hfm_+)f_\ga^{L_1\langle h_\ga,\varpi_m\rangle+1}
  + \sum_{\ga \in \gD[1]}\sum_{(r,s,k)\in T_1}U(\hfm_+){}_kf_{\ga}(r,s)
   +U(\hfm_+)t\fp_{\varpi_m}[t].
\end{align*}
Let $M$ be the $\fg[t]$-module generated by a vector $v$ with relations in Theorem \ref{Thm:Main2}.
By Lemma \ref{Lem:Chari-Venkatesh} (ii), $v$ satisfies
\[ {}_kf_\ga(r,s)v=0 \ \text{ for } \ga \in \gD[1], (r,s,k) \in T_1.
\]
Then we see from the above description of $\mathrm{Ann}_{U(\hfm_+)}v''$ that 
there exists a surjective $\hfm_+$-module homomorphism
from $V_m(\bm{\ell})$ to $M$ mapping $v''$ to $v$.
On the other hand, since
\[ V_m(\bm{\ell}) \cong V(\ell_1\varpi_m) \otimes \cdots \otimes V(\ell_p\varpi_m)
\] 
as a $\fg$-module,
we have $V_m(\bm{\ell})_{\mu} = 0$ if $\mu > L_1\varpi_{m}$, which implies $\fn_+v''=0$.
Then again by Lemma \ref{Lem:Chari-Venkatesh} (ii), $v''$ satisfies $(e_\ga \otimes t)^sf_\ga^{r+s}v'' = 0$ for 
$\ga \in \gD[1]$ and $r,s$ with $(r,s,k) \in T_1$ for some $k \in \Z_{>0}$,
and we also see that there exists a surjective $\fg[t]$-module homomorphism from $M$ to $V_m(\bm{\ell})$ mapping $v$ to $v''$.
Hence $V_m(\bm{\ell}) \cong M$ holds, and the theorem is proved.

The rest of this paper is devoted to prove Proposition \ref{Prop*}.
Define a left $U(\hfn_+)$-ideal $\mathcal{I}(q,j)$ by the right-hand side of (\ref{eq:annihilators}).
We prove the assertion by the induction on $(q,j)$. 
When $q=p$ and $j = 0$,  
\[ V(p,0) = F_\tau\C_{\ell_p\gL_0} \cong \C_{\ell_p(\varpi_m+\gL_0)}
\]
is a $1$-dimensional module with $\hfh_{\mathrm{cl}}$-weight $\ell_p(\varpi_m+\gL_0)$ on which $\hfn_+$ acts trivially.
Hence in order to verify the assertion in this case, it suffices to show that $\mathcal{I}(p,0) = U(\hfn_+)$. 
The containment $\mathcal{I}(p,0) \subseteq U(\hfn_+)$ is obvious,
and $\fn_+ + t\fp_{\varpi_m}[t] \subseteq \mathcal{I}(p,0)$ is easily seen.
Moreover since $L_{1+p} = 0$, $(1,s,s) \in T_p$ for every $s\in \Z_{>0}$,
and hence we have 
\[ {}_se_\ga(1,s) = e_\ga \otimes t^s \in \mathcal{I}(p,0) \ \text{ for } \ga \in \gD[-1], \ s \in \Z_{>0}.
\]
Hence $U(\hfn_+) \subseteq \mathcal{I}(p,0)$ holds.

Next we shall prove that, if the assertion for $(q,j-1)$ holds, then that for
$(q,j)$ also holds.
We write $i = i_j$ for short.
We have $V(q,j) = F_{i}V(q,j-1)$,
and the $\hfh_{\mathrm{cl}}$-weight of $v_{q,j-1}$ is $L_q\gs_{j-1}(\varpi_m)+\ell_q\gL_0$.
Moreover $e_iv_{q,j-1}=0$ holds by (\ref{eq:positive2}).
Set $v_{q,j} = f_i^{L_q}v_{q,j-1}$.
Since $V(q,j)$ is a submodule of an integrable module, 
it follows from the representation theory of $\mathfrak{sl}_2$ that
\[ v_{q,j} \neq 0, \ \ \ f_iv_{q,j} = 0,\ \text{ and } \ e_i^{L_q}v_{q,j} 
   \in \C^\times v_{q,j-1}.
\]
Hence we have
\[ V(q,j)=F_iV(q,j-1) = U(\hfp_i)v_{q,j-1} = U(\hfp_i)v_{q,j} = U(\hfb')v_{q,j},
\]
and the cyclicity of $V(q,j)$ is proved.
Moreover it is obvious that the $\hfh_{\mathrm{cl}}$-weight of $v_{q,j}$ is $L_{q}\gs_j(\varpi_m)+\ell_q\gL_0$.
It remains to prove $\mathrm{Ann}_{U(\hfn_+)}(v_{q,j}) = \mathcal{I}(q,j)$.
Let $\mathcal{J}$ be the left $U(\hfn_{i})$-ideal defined by
\begin{align*}
  \mathcal{J}=  \sum_{\begin{smallmatrix}\ga \in \gD[-1]\\ \gs_{j-1}(\ga) \in \gD_+\end{smallmatrix}}
   U(\hfn_i)&e_{\gs_{j-1}(\ga)}^{L_q+1} + \sum_{\begin{smallmatrix}\ga \in \gD[\ge0]\\ \gs_{j-1}(\ga) \in 
   \gD_+\setminus \{\ga_i\}
   \end{smallmatrix}}
   U(\hfn_i)e_{\gs_{j-1}(\ga)}\\&+\sum_{\ga\in\gD[-1]}\sum_{(r,s,k)\in T_q}
   U(\hfn_i){}_ke_{\gs_{j-1}(\ga)}(r,s) +U(\hfn_i)\Phi_{\gs_{j-1}}\big(t\fp_{\varpi_m}[t]).
\end{align*}
By the induction hypothesis we have
\begin{align*}
 \mathrm{Ann}_{U(\hfn_+)}v_{q,j-1} =\mathcal{I}(q,j-1)= U(\hfn_+)e_{i}+U(\hfn_+)\mathcal{J}.
\end{align*}
It suffices to show that $V(q,j-1)$, $v_{q,j-1}$ and $\mathcal{J}$ satisfy the conditions (i)--(iii) in Lemma
\ref{Lem:proceed}.
Indeed if they satisfy the conditions, it follows from the lemma that
\begin{align*}
 \mathrm{Ann}_{U(\hfn_+)}v_{q,j} = U(\hfn_+)e_i^{L_{q}+1} + U(\hfn_+)\Phi_i(\mathcal{J})= \mathcal{I}(q,j),
\end{align*}
as required.
The condition (i) follows from the induction hypothesis, and the condition (iii) is proved by \cite[Theorem 5]{MR1887117},
or \cite[Corollary 2.13 and Lemma 3.2(ii)]{MR2964614}.
In order to show the condition (ii) we need to prove that $\mathcal{J}$ is $\mathrm{ad}(e_i)$-invariant.
For that, we first verify for $\ga \in \gD \setminus \{-\ga_i\}$ that
\[ \ad(e_i)U\big(t\fg_\ga[t]\big) \subseteq \mathcal{J}.
\]
If $\ga + \ga_i \notin \gD$, this is obvious.
Assume that $\ga + \ga_i \in \gD$. 
Then it follows from (\ref{eq:positive2}) that
\[ \ga + \ga_i = \gs_{j-1}\big(\gs_{j-1}^{-1}(\ga) + \gs_{j-1}^{-1}(\ga_i)\big) \in \gs_{j-1}\big(\gD[\ge\!0]\big).
\]
Since $[e_{\ga+\ga_i},e_\ga] = 0$ holds, this implies
\[ \ad(e_i)U\big(t\fg_\ga[t]\big) \subseteq U(t\fg_\ga[t])t\fg_{\ga+\ga_i}[t] \subseteq \mathcal{J},
\] 
as required.
In a similar manner, $\ad(e_i)U\big(\C e_\ga\big) \subseteq \mathcal{J}$ for $\ga \in \gD_+$ is also proved.
In addition, $\ad(e_i)\big(t\fh[t]\big)= t\fg_{\ga_i}[t] \subseteq \mathcal{J}$ follows from (\ref{eq:positive2}).
Now combining these facts with Proposition \ref{Prop2}, $\ad(e_i)\big(\mathcal{J}\big) \subseteq \mathcal{J}$ is proved.

Finally it remains to prove that the assertion for $\big(q+1,\ell(\gs)\big)$ implies that for $(q,0)$.
Note that 
\begin{align*}
 V(q,0) = F_\tau \Big(\C_{(\ell_q-\ell_{q+1})\gL_0} \otimes V\big(q+1,\ell(\gs)\big)\Big)
  \cong \C_{(\ell_q-\ell_{q+1})(\varpi_m+\gL_0)} \otimes (\tau^{-1})^*V\big(q+1,\ell(\gs)\big).
\end{align*}
Let $z$ be a basis of $\C_{(\ell_q-\ell_{q+1})(\varpi_m+\gL_0)}$ and set
$v_{q,0} = z \otimes (\tau^{-1})^*v_{q+1,\ell(\gs)}$,
where $(\tau^{-1})^*v_{q+1,\ell(\gs)}$ is the image of $v_{q+1,\ell(\gs)}$ under the linear isomorphism 
$V\big(q+1,\ell(\gs)\big) \to (\tau^{-1})^*V\big(q+1,\ell(\gs)\big)$.
By (\ref{eq:rule_of_tau}), the $\hfh_{\mathrm{cl}}$-weight of $v_{q,0}$ is 
\begin{align*}
 (\ell_q-\ell_{q+1})(\varpi_m+\gL_0&) + \tau(-L_{q+1}\varpi_{m^*}+\ell_{q+1}\gL_0)\\ &= (\ell_q-\ell_{q+1})(\varpi_m+\gL_0)
   + \tau\big(-L_{q+1}(\varpi_{m^*}+\gL_0) + (\ell_{q+1}+L_{q+1})\gL_0\big)\\
  &= (\ell_q-\ell_{q+1})(\varpi_m+\gL_0) + \big(-L_{q+1}\gL_0+ (\ell_{q+1}+L_{q+1})(\varpi_m+\gL_0)\big)\\
  &= L_q\varpi_m + \ell_q\gL_0.
\end{align*}
Moreover since $\hfn_+$ acts trivially on $z$, we have
\begin{align}\label{eq:final_ann}
 \mathrm{Ann}_{U(\hfn_+)}(v_{q,0}) = \tau\Big(\mathrm{Ann}_{U(\hfn_+)}(v_{q+1,\ell(\gs)})\Big)
 =\tau\Big(\mathcal{I}\big(q+1,\ell(\gs)\big)\Big).
\end{align}
By noting $\tau \gs = t_{\varpi_m}$, we see that 
\begin{equation}\label{eq:tau2}
 \tau(e_{\gs(\ga)}\otimes t^s) = e_\ga \otimes t^{s-\langle h_\ga,\varpi_m\rangle} \ \text{ for } \ga \in \gD, 
 s \in \Z_{\ge 0}.
\end{equation}

\begin{Lem}\label{Lem:elementary_lemma}
 For $\ga \in \gD[-1]$ and $r,s,k \in \Z_{>0}$ we have
 \[ \tau\big({}_ke_{\gs(\ga)}(r,s)\big) = {}_{k+1}e_{\ga}(r,s+r).
 \]
\end{Lem}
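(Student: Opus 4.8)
The plan is to compute both sides term-by-term against the definition of $_ke_x(r,s)$ and track how $\tau$ acts on each factor $e_{\gs(\ga)} \otimes t^j$. By \eqref{eq:tau2}, since $\ga \in \gD[-1]$ means $\langle h_\ga,\varpi_m\rangle = -1$, we have
\[
 \tau\big(e_{\gs(\ga)}\otimes t^j\big) = e_\ga \otimes t^{\,j+1}.
\]
So applying $\tau$ to a monomial $(e_{\gs(\ga)}\otimes t^k)^{(b_k)}(e_{\gs(\ga)}\otimes t^{k+1})^{(b_{k+1})}\cdots (e_{\gs(\ga)}\otimes t^{s})^{(b_{s})}$ indexed by $(b_j)_{j\ge 0}\in {}_k\mathbf{S}(r,s)$ simply shifts every tensor exponent up by one, producing $(e_\ga\otimes t^{k+1})^{(b_k)}\cdots (e_\ga\otimes t^{s+1})^{(b_s)}$. (Note $\tau$ is an algebra automorphism, so it respects products and divided powers, and since all the factors $e_{\gs(\ga)}\otimes t^j$ commute — they lie in the abelian Lie algebra $\fg_{\gs(\ga)}[t]$ — no reordering issues arise.)

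The key combinatorial step is then to identify the index set. Define a map $(b_j)_{j\ge 0}\mapsto (b'_j)_{j\ge 0}$ by $b'_{j+1} = b_j$ for all $j\ge 0$ (and $b'_0 = 0$); this is a bijection from $\mathbf{S}(r,s)$ to the set of sequences with total sum $r$ and weighted sum $\sum_j j b'_j = \sum_j (j+1)b_j = s + r$. Moreover $b_j = 0$ for $j<k$ exactly when $b'_j = 0$ for $j < k+1$, so this bijection restricts to an identification ${}_k\mathbf{S}(r,s) \xrightarrow{\sim} {}_{k+1}\mathbf{S}(r,s+r)$. Under it, the shifted monomial above is precisely the monomial $(e_\ga\otimes t^{k+1})^{(b'_{k+1})}\cdots (e_\ga\otimes t^{s+1})^{(b'_{s+1})}$ appearing in the definition of ${}_{k+1}e_\ga(r,s+r)$. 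Summing over all indices gives $\tau\big({}_ke_{\gs(\ga)}(r,s)\big) = {}_{k+1}e_\ga(r,s+r)$, as claimed.

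This lemma is entirely elementary and I do not expect any real obstacle; the only thing to be careful about is the sign/direction of the exponent shift coming from \eqref{eq:tau2} (it is $+1$ precisely because $\langle h_\ga,\varpi_m\rangle = -1$ for $\ga\in\gD[-1]$), and the corresponding shift $k\mapsto k+1$, $s\mapsto s+r$ in the index set of the summation. One should also record that the empty-sum conventions match up: if ${}_k\mathbf{S}(r,s)=\emptyset$ then ${}_{k+1}\mathbf{S}(r,s+r)=\emptyset$ as well, so both sides are $0$ in that case.
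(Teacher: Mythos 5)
Your proof is correct and follows essentially the same route as the paper: the exponent shift $t^j \mapsto t^{j+1}$ from (\ref{eq:tau2}) (using $\langle h_\ga,\varpi_m\rangle=-1$) combined with the bijection ${}_k\mathbf{S}(r,s)\to{}_{k+1}\mathbf{S}(r,s+r)$, $b'_j=b_{j-1}$. The paper merely states this more tersely; your extra remarks on divided powers, commutativity, and the empty-set convention are harmless elaborations.
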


\begin{proof}
 It is easily seen that the map 
 \[ {}_k\mathbf{S}(r,s) \ni (b_j)_{j\ge 0} \to (b_j')_{j\ge0}\in{}_{k+1}\mathbf{S}(r,s+r)
 \] 
 defined by $b_j' = b_{j-1}$ is bijective.
 Then the assertion is proved from (\ref{eq:tau2}).
\end{proof}

Using (\ref{eq:tau2}) and the above lemma, we see that
\begin{align}\label{eq:tauI}
 \tau\Big(\mathcal{I}\big(q+1,\ell(\gs)\big)\Big) = 
 \sum_{\ga \in \gD[1]} U(\hfn_+&)\Big\{\C(f_\ga\otimes t)^{L_{q+1}+1}
 \\ +&\sum_{(r,s,k)\in T_{q+1}}\C\, {}_{k+1}f_\ga(r,s+r)\Big\}
 + U(\hfn_+)\big(\fn_+ + t\fp_{\varpi_m}[t]\big).\nonumber 
\end{align}
On the other hand, we have
\begin{align}\label{eq:Iq0}
 \mathcal{I}(q,0) = \sum_{\ga \in \gD[1]} \sum_{(r,s,k)\in T_q}U(\hfn_+){}_{k}f_\ga(r,s)
 + U(\hfn_+)\big(\fn_+ + t\fp_{\varpi_m}[t]\big).
\end{align}
It is easily checked that 
\[ \big\{(r,s+r,k+1)\bigm| (r,s,k) \in T_{q+1}\big\} = \{(r,s,k) \in T_q\mid k>1\},
\]
which implies
\begin{equation}\label{eq:easy_equal}
  \sum_{(r,s,k)\in T_{q+1}}U(\hfn_+) {}_{k+1}f_\ga(r,s+r) = \sum_{\begin{smallmatrix} (r,s,k)\in T_q \\ k > 1
  \end{smallmatrix}}U(\hfn_+) {}_{k}f_\ga(r,s).
\end{equation}
We shall prove for each $\ga \in \gD[1]$ that
\begin{align}\label{eq:final1}
 (f_\ga\otimes \,t)^{L_{q+1}+1}&\in \sum_{(r,s,k)\in T_q}U(\hfn_+){}_{k}f_\ga(r,s), \ \text{ and}\\ 
 \label{eq:final2}
 {}_1f_\ga(r,s) &\in U(\hfn_+)\Big(\C(f_\ga\otimes t)^{L_{q+1}+1} +  \fn_+[t] + t\fh[t]\Big)\ \text{ for } r,s 
 \text{ with } (r,s,1) \in T_q.
\end{align}
By comparing (\ref{eq:tauI}) and (\ref{eq:Iq0}) and using (\ref{eq:easy_equal}), we see that these imply 
\[ \tau\Big(\mathcal{I}\big(q+1,\ell(\gs)\big)\Big) = \mathcal{I}(q,0),
\]
and then by (\ref{eq:final_ann}) we have $\mathrm{Ann}_{U(\hfn_+)}(v_{q,0}) = \mathcal{I}(q,0)$,
which completes the proof.
Setting $r=s=L_{q+1}+1$, we have
\[ {}_1f_\ga(r,s) = (f_\ga\otimes t)^{L_{q+1}+1},
\]
and hence (\ref{eq:final1}) follows.
Assume that $r,s$ satisfy $(r,s,1) \in T_q$, which implies $s \ge L_{q+1}+1$.
Since ${}_1f_\ga(r,s) = 0$ if $r >s$, we may assume $r\le s$.
If we apply the Lie algebra automorphism $\tau\circ\Phi_{\gs}$ to (\ref{eq:Garland}) with $s$ replaced by $s-r$, we have
from Lemma \ref{Lem:elementary_lemma} that
\[ e_{\ga}^{(s-r)}(f_\ga\otimes t)^{(s)} - (-1)^{s-r}{}_1f_\ga(r,s) \in U(\hfn_+)\big(\fn_+[t] \oplus t\fh[t]\big).
\]
Hence (\ref{eq:final2}) also holds.

\section*{Acknowledgement}
The author is supported by JSPS Grant-in-Aid for Young Scientists (B) No.\ 25800006.

\def\cprime{$'$} \def\cprime{$'$}


\end{document}